\newtheorem{theorem}{Theorem}
\newtheorem{lemma}[theorem]{Lemma}
\newtheorem{corollary}[theorem]{Corollary}
\newtheorem{conjecture}[theorem]{Conjecture}
\newtheorem{maintheorem}[theorem]{Main Theorem}
\theoremstyle{definition}
\newtheorem{notrems}[theorem]{Notation and Remarks}
\newtheorem{notation}[theorem]{Notation}
\newtheorem{remark}[theorem]{Remark}
\newcommand{\Section}[1]{\section{#1}\setcounter{theorem}{0}}
\begin{document}

\title[On the corner contributions to the heat coefficients]{On the corner contributions
to the heat coefficients of geodesic polygons}
\author{Dorothee Schueth}
\address{Institut f\"ur Mathematik, Humboldt-Universit\"at zu
Berlin, 10099 Berlin, Germany}
\email{schueth@math.hu-berlin.de}

\keywords{Laplace operator, heat kernel, heat coefficients, orbifolds, cone points,
corner contribution, distance function expansion}
\subjclass[2010]{58J50}

\dedicatory{This paper is dedicated to the memory of Marcel Berger.}

\begin{abstract}
Let $\mathcal O$ be a compact Riemannian orbisurface. We compute formulas for
the contribution of cone points of~$\mathcal O$
to the coefficient at~$t^2$ of the asymptotic expansion of the heat trace of~$\mathcal O$,
the contributions at $t^0$ and $t^1$ being known from the literature. As an application,
we compute the coefficient at~$t^2$ of the contribution of interior angles of the form $\gamma=\pi/k$ in
geodesic polygons in surfaces to the asymptotic expansion of the Dirichlet heat kernel of the polygon,
under a certain symmetry assumption locally near the corresponding corner. The main novelty here is
the determination of the way in which the Laplacian of the Gauss curvature at the corner point enters into the
coefficient at~$t^2$.
We finish with a conjecture concerning the analogous contribution of an arbitrary
angle~$\gamma$ in a geodesic polygon.
\end{abstract}

\maketitle

\Section{Introduction}
\label{sec:intro}

\noindent
This paper concerns the influence of certain singularities on the heat coefficients.
The systematic study of heat coefficients in the context
of smooth Riemannian manifolds started in the 1960s. 
Let $(M^d,g)$ be a closed and connected Riemannian manifold, $\Delta_g=-\operatorname{div}_g\circ\operatorname{grad}_g$ the
associated Laplace operator, and $H:(0,\infty)\times M\times M\to{\mathbb R}$ the corresponding
heat kernel. Minakshisundaram and Pleijel~\cite{MP} proved that there is an asymptotic expansion
$$H(t,p,q)\sim_{t\searrow0}(4\pi t)^{-d/2}e^{-\operatorname{dist}^2(p,q)/4t}\sum_{\ell=0}^\infty {\boldsymbol u}_\ell(p,q)t^\ell
$$
for $(p,q)$ in some neighborhood of the diagonal in $M\times M$, and they gave recursive formulas
for the functions~${\boldsymbol u}_\ell$\,.
Correspondingly, the heat trace
$$Z:t\mapsto\int_M H(t,p,p)\operatorname{\,\textit{dvol}}_g(p)=\sum_{j=0}^\infty e^{-t\lambda_j},
$$
where  $0=\lambda_0<\lambda_1\le\lambda_2\le\ldots\to\infty$ is the eigenvalue spectrum
of~$\Delta_g$ with multiplicities,  admits the asymptotic expansion
\begin{equation*}
Z(t)\sim_{t\searrow0}(4\pi t)^{-d/2}\sum_{\ell=0}^\infty a_\ell t^\ell
\end{equation*}
with the so-called \emph{heat coefficients}
$$a_\ell:=\int_M {\boldsymbol u}_\ell(p,p)\operatorname{\,\textit{dvol}}_g(p)\,.
$$
Each of the coefficients $a_\ell$ in this expansion is a spectral invariant
in the sense that it is determined by the eigenvalue spectrum of~$\Delta_g$\,.
Here, ${\boldsymbol u}_0=1$ and $a_0$ is just the volume of~$(M,g)$.

Formulas for $a_1$ and $a_2$ -- more precisely, even for ${\boldsymbol u}_1(p,p)$ and ${\boldsymbol u}_2(p,p)$ --
were first given by Marcel Berger in his announcement~\cite{Be1} of~1966.
One has
$${\boldsymbol u}_1(p,p)=\frac16\operatorname{scal}_g(p)\,,
$$
where $\operatorname{scal}_g$ denotes the scalar curvature associated with~$g$.
Although Berger called that formula ``folklore'',
he was the first to publish a proof of it, in 1968, in his paper~\cite{Be2}. In the same paper,
he proved the formula
$${\boldsymbol u}_2(p,p)
=\frac1{360}(5\operatorname{scal}_g^2-2\|\operatorname{ric}_g\|^2+2\|R_g\|^2-12\Delta_g\operatorname{scal}_g)(p),
$$
where $\operatorname{ric}_g$ and $R_g$ denote
the Ricci and the Riemannian curvature tensor, respectively.
This formula was considerably more intricate to derive than that for~${\boldsymbol u}_1(p,p)$. Berger's method was a direct
calculation in local coordinates, using Minakshisundaram/Pleijel's recursive formulas for the~${\boldsymbol u}_\ell$\,. Meanwhile, in 1967,
McKean and Singer~\cite{MS} had found a shorter way of deriving the corresponding formula for~$a_2$\,. However,
this did not provide an alternative proof of Berger's full formula for ${\boldsymbol u}_2(p,p)$ (which will actually be
needed in the present paper): Its last term is not visible in~$a_2$ since the integral over
$\Delta_g\operatorname{scal}_g$ vanishes.

In 1971, Sakai computed $a_3$ using an approach much similar to Berger's. Later, Gilkey computed formulas
for heat coefficients in more general contexts like Schr\"odinger operators on vector bundles
and, together with Branson, for manifolds with smooth boundary (see~\cite{Gi}, \cite{BG}).
For nonempty boundary, also half-powers of~$t$ can occur in the asymptotic expansion of the corresponding
heat trace (with, e.g., Dirichlet or Neumann boundary conditions).
On the other hand, also surfaces with corners -- albeit only in the case of polygons in euclidean~${\mathbb R}^2$ --
were considered as early as 1966 in Kac's famous paper~\cite{Ka}, where it was shown that the Dirichlet heat
trace satisfies
\begin{equation}
\label{eq:Kac-MS}
Z(t)=(4\pi t)^{-1}\operatorname{vol}(M)-(4\pi t)^{-1/2}\cdot\frac14\operatorname{vol}(\partial M)
     +\sum_{i=1}^N \frac{\pi^2-\gamma_i^2}{24\gamma_i\pi}+O(t^\infty)
\end{equation}
for $t\searrow0$, where $\gamma_1,\dots,\gamma_N$ are the interior angles of the polygon. Actually,
Kac's formula for the angle contribution was more complicated; McKean and Singer brought it into the above form
in their paper~\cite{MS} of 1967, using an unpublished formula of D.~Ray. A full proof of~(\ref{eq:Kac-MS}) was given
in 1988 by van den Berg and Srisatkunarajah~\cite{vdBS}. In 2005, Watson~\cite{Wa} computed the heat coefficients
for geodesic polygons in the round two-sphere; in 2017, U\c car~\cite{Uc} achieved the same for the more
difficult case of geodesic polygons
in the hyperbolic plane. Here, in contrast to the flat case, the asymptotic
expansion of $Z(t)$ does not break off as in~(\ref{eq:Kac-MS}), and there are infinitely many coefficients involving
contributions from the corners. More precisely, for a geodesic polygon in a surface of constant curvature~$K$,
the contribution of an interior angle~$\gamma$ to the small-time asymptotic expansion of~$Z(t)$ has the form
\begin{equation}
\label{eq:Uc}
\sum_{\ell=0}^\infty e_\ell(\gamma)K^\ell t^\ell\,;
\end{equation}
see Corollary~3.37 in~\cite{Uc}, including explicit formulas for the $e_\ell(\gamma)$. As an application,
U\c car proved that for constant $K\ne0$, the set of angles of a geodesic polygons,
including multiplicities, is spectrally determined (Theorem~3.40 in~\cite{Uc}).

While (\ref{eq:Uc}) just turned out from Watson's and U\c car's direct computations,
U\c car also gave, in the special case that $\gamma$ is of the form $\gamma=\pi/k$, a conceptual proof
of the fact that the coefficient at $t^\ell$ must be of the form $e_\ell(\gamma)K^\ell$.
Note that this cannot be achieved by just rescaling, since $K$ can be either positive or negative.
For his reasoning, U\c car used a qualitative description -- involving curvature invariants --
by Donnelly~\cite{Do} and Dryden et al.~\cite{DGGW}
concerning the contribution of orbifold singularities to the heat coefficients of Riemannian orbifolds.
He showed that the heat coefficient contributions of a corner with interior angle $\gamma=\pi/k$
in a geodesic polygon of constant curvature with Dirichlet boundary
conditions can be viewed, in a sense, as the difference between the contributions of an orbifold cone point of order~$k$
and a dihedral orbifold singularity with isostropy group of order~$2k$; see p.~142--144 in~\cite{Uc}.
Since those two contributions are, by Donnelly's structural theory, known
to be determined by $\gamma=\pi/k$ and curvature invariants of appropriate order, 
and since the only curvature invariant of order~$2\ell$ in the case of constant curvature
is $K^\ell$, this implies that the coefficients must be of the form $e_\ell(\gamma)K^\ell$ here.

The present paper constitutes a first step into studying corner contributions in the setting of geodesic polygons
in surfaces of \emph{nonconstant} curvature.
Under a certain symmetry assumption around the corresponding corner~$p$ (see (\ref{ass:symmetry}) in~\ref{not:polyecke}),
we show in our {\bf Main Theorem~\ref{mainthm:corner}} that the contribution of an interior
angle of the form $\gamma=\pi/k$ to the small-time asymptotic expansion
of the Dirichlet heat trace of the polygon is of the form
$$\sum_{t=0}^\infty c_\ell(\gamma)t^\ell
$$
with
$$c_0(\gamma)=\frac{\pi^2-\gamma^2}{24\gamma\pi}\,,\;\;
c_1(\gamma)=\Bigl(\frac{\pi^4-\gamma^4}{720\gamma^2\pi}
+\frac{\pi^2-\gamma^2}{72\gamma\pi}\Bigr)K(p),
$$
and
\begin{equation}
\label{eq:c2pik}
c_2(\gamma)=\Bigl(\frac{\pi^6-\gamma^6}{5040\gamma^5\pi}+\frac{\pi^4-\gamma^4}{1440\gamma^3\pi}
+\frac{\pi^2-\gamma^2}{360\gamma\pi}\Bigr)K(p)^2
-\Bigl(\frac{\pi^6-\gamma^6}{30240\gamma^5\pi}+\frac{\pi^4-\gamma^4}{2880\gamma^3\pi}
+\frac{\pi^2-\gamma^2}{360\gamma\pi}\Bigr)\Delta_gK(p),
\end{equation}
with our sign convention $\Delta_g=-\operatorname{div}_g\circ\operatorname{grad}_g$\,.
The coefficient~$c_0(\gamma)$ is not new (see~\cite{MR}); moreover, $c_1(\gamma)$ and the coefficient
at~$K(p)^2$ in (\ref{eq:c2pik}) coincide, of course, with U\c car's corresponding formulas for constant curvature.
The main novelty here is the coefficient at $\Delta_gK(p)$ in~(\ref{eq:c2pik})
which, of course, did not appear
in the constant curvature case. We conjecture that these formulas generalize to the case of arbitrary $\gamma\in(0,2\pi]$
under the assumption that the Hessian of~$K$ at~$p$ is a multiple of the metric (Conjecture~\ref{conj}).

Our strategy for proving the Main Theorem again uses orbifold theory.
For a cone point $\bar p$~of order~$k$
in a closed Riemannian orbisurface $(\mathcal O,g)$ we compute the coefficient $a_2^{(\{\bar p\})}$ at~$t^2$ of its contribution to the
heat trace of~$(\mathcal O,g)$ ({\bf Theorem~\ref{thm:cone}}), the coefficients at $t^0$ and $t^1$ being known from the
literature~\cite{Do}, \cite{DGGW} (see Remark~\ref{rem:cone-a0a1}).
We then show that under the symmetry assumption~(\ref{ass:symmetry}) from~\ref{not:polyecke},
each $c_\ell(\pi/k)$ is just $\frac12$ times the corresponding $a_\ell^{(\{\bar p\})}$
(Remark~\ref{rem:relation}); this implies our Main Theorem~\ref{mainthm:corner}.
In turn, to prove Theorem~\ref{thm:cone} we first compute the coefficient $b_2(\Phi)$
at~$t^2$ in Donnelly's asymptotic expansion of the integral of $H(t,\,.\,,\Phi(\,.\,))$
over a small neighborhood of~$p$ in a surface $(M,g)$,
where $\Phi$ is an isometry of a (slightly bigger) neighborhood whose differential at~$p$ is a rotation by an
angle~$\varphi\in(0,\pi]$ (Theorem~\ref{thm:b2}); we then use a formula from~\cite{DGGW} (see~(\ref{eq:cone-sum})).
For the computation of~$b_2(\Phi)$, we closely follow Donnelly's proof of the existence
of the mentioned asymptotic expansion (in a much more general setting) from~\cite{Do}.
In preparation for that, we have to give expansions for $r\searrow0$
of $r\mapsto{\boldsymbol u}_0(\exp_p(ru),\Phi(\exp_p(ru)))$ (up to order the order of~$r^4$) and of $r\mapsto{\boldsymbol u}_1(\exp_p(ru),\Phi(\exp_p(ru))$
(up to the order of~$r^2$), where $u\in T_pM$ is a unit vector (Lemma~\ref{lem:u0bisr4}).
Moreover, we need the expansion of the Riemannian distance $\operatorname{dist}(\exp_p(ru),\Phi(\exp_p(ru)))$ up to the order of $r^6$
(Corollary~\ref{cor:distexpansion}, Lemma~\ref{lem:distexpansion-uv}). Since a formula for the sixth order expansion of the distance funcion
did not seem to be available in the literature, we first give a general formula for the sixth order
expansion of $\operatorname{dist}^2(\exp_p(x),\exp_p(y))$ in surfaces, where $x,y$ are tangent vectors at~$p$
(Lemma~\ref{lem:distexpansion}). For the proof, we partly follow an approach by Nicolaescu~\cite{Ni} which uses
a Hamilton-Jacobi equation for~$\operatorname{dist}^2(q,\,.\,)$.

This paper is organized as follows:
In Section~\ref{sec:prelims}, we provide some notation and technical preparations, among these the sixth order expansion
of the distance function in surfaces (Lemma~\ref{lem:distexpansion} and Corollary~\ref{cor:distexpansion}; the proof
of Lemma~\ref{lem:distexpansion} is postponed to the Appendix).
In Section~\ref{sec:donnelly}, we first prove Lemma~\ref{lem:u0bisr4} concerning the mentioned expansions of ${\boldsymbol u}_0$ and ${\boldsymbol u}_1$\,;
we then deduce Theorem~\ref{thm:b2} concerning $b_2(\Phi)$ by following Donnelly's approach.
Section~\ref{sec:orbi} is devoted to the computation of $a_2^{(\{\bar p\})}$ for cone points of order~$k$ in orbisurfaces
(Theorem~\ref{thm:cone}), using Theorem~\ref{thm:b2} and Dryden et al.'s formula (\ref{eq:cone-sum}).
In Section~\ref{sec:applic} we prove our Main Theorem~\ref{mainthm:corner}; we conclude with some remarks and Conjecture~\ref{conj}.

\medskip
\bf Acknowledgement. \rm
The author thanks the organizers of the conference ``Riemannian Geometry Past, Present and Future: an homage to Marcel Berger''
in December 2017 for inviting her as a speaker, which was a great honour for her. Part of the inspiration
for the results in this article was provided by having a closer look, for that occasion,
at Berger's seminal early works \cite{Be1}, \cite{Be2},
\cite{Be3}, \cite{BGM} in spectral geometry
-- and also by his fearless use of a bit of ``calcul brutal'' when needed
(quotation from the first line of p.~923 in~\cite{Be2}).

\Section{Preliminaries}
\label{sec:prelims}

\noindent
In this paper, $(M,g)$ will always denote a two-dimensional Riemannian manifold
and $K:M\to{\mathbb R}$ its Gauss curvature. Let $\Delta_g=-\operatorname{div}_g\circ\operatorname{grad}_g$ be the
Laplace operator on smooth functions on~$M$. By $\nabla^2 K$ we denote the Hessian tensor of~$K$;
that is, $\nabla^2 K_p(x,y)=g_p(\nabla_x\operatorname{grad}_g K,y)$ for $x,y\in T_pM$, where $\nabla$ denotes the
Levi-Civita connection.
In particular, if $\{u,\tilde u\}$ is an orthonormal basis of~$T_pM$ then
$$\Delta_gK(p)=-\nabla^2K_p(u,u)-\nabla^2K_p(\tilde u,\tilde u).
$$

\begin{notrems}\
\label{notrems:density}
Let $p\in M$ and $u\in T_pM$ with $\|u\|=1$.

(i)
If $\tilde u\in T_pM$ is a unit vector with $\tilde u\perp u$ and $J$ the Jacobi field
along the geodesic $\gamma_u$ with $J(0)=0$, $J'(0)=\tilde u$, then
$$\ell_u(r):=\|(d\exp_p)_{ru}(r\tilde u)\|=\|J(r)\|
$$
has the following well-known expansion for $r\searrow0$:
\begin{equation}
\label{eq:ell}
\ell_u(r)=r-\frac16 K(p)r^3-\frac1{12}dK_p(u)r^4
+\Bigl(\frac1{120}K(p)^2-\frac1{40}\nabla^2K_p(u,u)\Bigr)r^5+O(r^6).
\end{equation}
This follows from the Jacobi equation $J''=-(K\circ\gamma_u)J$ for Jacobi fields orthogonal to~$\dot\gamma_u$\,.

(ii)
For small $r>0$, we denote by $\theta_u(r)$ the so-called volume density
or area distortion of $\exp_p$ at $ru\in T_pM$. In other words, $\theta_u(r)=(\det g_{ij}(ru))^{1/2}$
in normal coordinates around~$p$. Since $\exp_p$ is a radial isometry and we are in dimension two,
we have
$$\theta_u(r)=\ell_u(r)/r.
$$
Thus (\ref{eq:ell}) implies:
\begin{equation}
\label{eq:theta}
\theta_u(r)=1-\frac16 K(p)r^2-\frac1{12}dK_p(u)r^3
+\Bigl(\frac1{120}K(p)^2-\frac1{40}\nabla^2K_p(u,u)\Bigr)r^4+O(r^5).
\end{equation}

(iii)
For $\ell\in{\mathbb N}_0$, let ${\boldsymbol u}_\ell$ denote the (universal) functions, defined on some neighborhood of the diagonal in $M\times M$,
which in case of closed surfaces appear in the asymptotic expansion of the heat kernel of $(M,g)$:
\begin{equation*}
H(t,p,q)\sim(4\pi t)^{-1}\exp(-\operatorname{dist}^2(p,q)/4t)\cdot\sum_{\ell=0}^\infty {\boldsymbol u}_\ell(p,q)t^\ell\text{ \ as \ }t\searrow0,
\end{equation*}
where $\operatorname{dist}:M\times M\to{\mathbb R}$ denotes Riemannian the distance function of $(M,g)$.

(iv)
It is well-known that ${\boldsymbol u}_0=\theta^{-1/2}$ (see~\cite{MP}); more precisely,
$${\boldsymbol u}_0(p,\exp_p(ru))=\theta_u(r)^{-1/2}
$$
for small $r\ge0$. In particular, (\ref{eq:theta}) implies
\begin{equation}
\label{eq:u0}
{\boldsymbol u}_0(p,\exp_p(ru))=1+\frac1{12}K(p)r^2+\frac1{24}dK_p(u)r^3
+\Bigl(\frac1{160}K(p)^2+\frac1{80}\nabla^2K_p(u,u)\Bigr)r^4
+O(r^5).
\end{equation}

(v)
As proved in~\cite{Be2} by Marcel Berger, the restriction of~${\boldsymbol u}_2$ to the diagonal is given by
$${\boldsymbol u}_2(p,p)=\frac1{72}\operatorname{scal}^2(p)-\frac1{180}\|\operatorname{ric}_p\|^2+\frac1{180}\|R_p\|^2-\frac1{30}\Delta_g\operatorname{scal}(p),
$$
where $\operatorname{scal}$, $\operatorname{ric}$, $R$ denote the scalar curvature, the Ricci and the Riemannian curvature tensor, respectively.
Recall our choice of sign for $\Delta_g=-\operatorname{div}_g\circ\operatorname{grad}_g$\,.
In dimension two, the above formula simplifies to
\begin{equation}
\label{eq:u2}
{\boldsymbol u}_2(p,p)=\frac1{15}K(p)^2-\frac1{15}\Delta_gK(p).
\end{equation}

\end{notrems}

\begin{lemma}
In the notation of~\ref{notrems:density},
\begin{multline}
\label{eq:u1}
{\boldsymbol u}_1(p,\exp_p(ru))=\frac13 K(p)+\frac16 dK_p(u)r
+\Bigl(\frac1{30}K(p)^2-\frac1{120}\Delta_gK(p)+\frac1{20}\nabla^2K_p(u,u)\Bigr)r^2\\
+O(r^3)
\end{multline}
for $r\searrow0$.
\end{lemma}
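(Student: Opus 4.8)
The plan is to use the second Minakshisundaram--Pleijel transport equation, which along the geodesic $s\mapsto\exp_p(su)$ and with $\Delta_g$ acting on the second argument takes the form
\[
r\,\partial_r{\boldsymbol u}_1+\Bigl(1+\tfrac{r}{2}\,\partial_r\log\theta_u\Bigr){\boldsymbol u}_1=-\Delta_g{\boldsymbol u}_0\,,
\]
the sign on the right being dictated by our convention $\Delta_g=-\operatorname{div}_g\circ\operatorname{grad}_g$ together with the fact that ${\boldsymbol u}_0=\theta^{-1/2}$ solves the $\ell=0$ equation. Using $r\,\theta_u^{1/2}$ as integrating factor and the regularity of~${\boldsymbol u}_1$ at $r=0$, this integrates to
\[
{\boldsymbol u}_1(p,\exp_p(ru))=-\theta_u(r)^{-1/2}\,r^{-1}\int_0^r\theta_u(s)^{1/2}\,(\Delta_g{\boldsymbol u}_0)(p,\exp_p(su))\,ds\,.
\]
Since the prefactor $\theta_u(r)^{-1/2}={\boldsymbol u}_0(p,\exp_p(ru))$ and the factor $\theta_u(s)^{1/2}$ already have the expansions~(\ref{eq:u0}) and~(\ref{eq:theta}), the whole problem reduces to expanding $g(s):=(\Delta_g{\boldsymbol u}_0)(p,\exp_p(su))$ up to order~$s^2$: writing $g(s)=g_0+g_1s+g_2s^2+O(s^3)$ and integrating pins down the three coefficients of the Lemma in terms of $g_0,g_1,g_2$. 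As a first consistency check the $s^0$ term must reproduce ${\boldsymbol u}_1(p,p)=\tfrac13K(p)$, which forces $g_0=-\tfrac13K(p)$.

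To compute $g(s)$ I would work in geodesic polar coordinates $(r,\phi)$ centred at~$p$, in which $g=dr^2+f(r,\phi)^2\,d\phi^2$ with $f(r,\phi)=\ell_{u(\phi)}(r)$ and $\theta_u=f/r$, so that ${\boldsymbol u}_0=(r/f)^{1/2}$ and
\[
\Delta_g{\boldsymbol u}_0=-\tfrac1f\,\partial_r\bigl(f\,\partial_r{\boldsymbol u}_0\bigr)-\tfrac1f\,\partial_\phi\bigl(\tfrac1f\,\partial_\phi{\boldsymbol u}_0\bigr)\,.
\]
The radial part is routine: inserting the expansion~(\ref{eq:u0}) of~${\boldsymbol u}_0$ and~(\ref{eq:ell}) of $f=\ell_u$ and expanding yields a contribution $-\tfrac13K-\tfrac38\,dK_p(u)\,s+\bigl(-\tfrac2{45}K^2-\tfrac15\nabla^2K_p(u,u)\bigr)s^2+O(s^3)$. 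The delicate part is the angular operator, which is exactly where the \emph{two}-dimensional (rather than purely radial) behaviour of the density enters: here one must differentiate the \emph{direction-dependent} curvature quantities $dK_p(u(\phi))$ and $\nabla^2K_p(u(\phi),u(\phi))$ occurring in~(\ref{eq:ell}) with respect to~$\phi$, using $\partial_\phi u(\phi)=\tilde u$ for the unit vector $\tilde u$ obtained by rotating~$u$ by $\pi/2$.

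I expect the main obstacle to be controlling this angular contribution, and in particular seeing how $\Delta_gK(p)$ is produced. The mechanism is the identity
\[
\partial_\phi^2\,\nabla^2K_p\bigl(u(\phi),u(\phi)\bigr)=2\,\nabla^2K_p(\tilde u,\tilde u)-2\,\nabla^2K_p(u,u)\,,
\]
combined with $\nabla^2K_p(u,u)+\nabla^2K_p(\tilde u,\tilde u)=-\Delta_gK(p)$ from Section~\ref{sec:prelims}, which turns the second angular derivative into $-2\Delta_gK(p)-4\nabla^2K_p(u,u)$; similarly $\partial_\phi^2\,dK_p(u(\phi))=-dK_p(u)$. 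Carrying these through the angular operator gives the angular contribution $\tfrac1{24}\,dK_p(u)\,s+\bigl(\tfrac1{40}\Delta_gK(p)+\tfrac1{20}\nabla^2K_p(u,u)\bigr)s^2+O(s^3)$.

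Adding the radial and angular parts produces $g_0=-\tfrac13K(p)$, $g_1=-\tfrac13\,dK_p(u)$ and $g_2=-\tfrac2{45}K(p)^2+\tfrac1{40}\Delta_gK(p)-\tfrac3{20}\nabla^2K_p(u,u)$, and feeding these back into the integral formula yields precisely the coefficients claimed in~(\ref{eq:u1}). The only genuine bookkeeping is keeping enough orders in~$r$ throughout: one needs $f=\ell_u$ to order~$r^5$ and ${\boldsymbol u}_0$ to order~$r^4$, both of which are available from~(\ref{eq:ell}) and~(\ref{eq:u0}). As an independent safeguard one could cross-check the result against the diagonal value~${\boldsymbol u}_2(p,p)$ in~(\ref{eq:u2}) through the next transport equation, where the apparent $r^{-1}$ singularities of the radial and angular parts of $\Delta_g{\boldsymbol u}_1$ must cancel.
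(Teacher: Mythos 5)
Your proposal is correct and follows essentially the same route as the paper: the Minakshisundaram--Pleijel recursion for ${\boldsymbol u}_1$ in terms of $\Delta_g{\boldsymbol u}_0$ (your transport-equation form integrates to exactly the paper's formula~(\ref{eq:u1rec}) after substituting $s=tr$), followed by a radial/angular splitting of $\Delta_g{\boldsymbol u}_0$ in geodesic polar coordinates in which the $\Delta_gK(p)$ term arises from the second angular derivative of $\nabla^2K_p(u(\phi),u(\phi))$. All your intermediate coefficients (the radial and angular contributions, and $g_0,g_1,g_2$) agree with the paper's computation of $A_1,A_2,A_3$.
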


\begin{proof}
One way to obtain this is specializing Sakai's formulas (3.7), (4.3)--(4.5) from~\cite{Sa} (for arbitrary dimension~$n$)
to dimension two and then translating into our notation. An alternative proof which uses the
two-dimensional setting right away is as follows:
By Minakshisundaram/Pleijel's recursion formula from~\cite{MP} for the ${\boldsymbol u}_\ell$\,, applied to $\ell=1$,
\begin{equation}
\label{eq:u1rec}
{\boldsymbol u}_1(p,\exp_p(ru))=
-{{\boldsymbol u}_0(p,\exp(ru))\int_0^1{\boldsymbol u}_0(p,\exp_p(tru))^{-1}
\bigl(\Delta_g{\boldsymbol u}_0(p,\,.\,)\bigr)(\exp_p(tru))\,dt.}
\end{equation}
For small $r>0$, the curvature of the distance sphere $\partial B_r(p)$ at $\exp_p(ru)$ is
$$\frac1r+\frac{\theta'_u(r)}{\theta_u(r)}=\frac1r-\frac13 K(p)r+O(r^2),
$$
where the latter equation holds by~(\ref{eq:theta}).
Moreover, letting $\tilde u$ be a unit vector orthogonal to~$u$ and
$$u(s):=\cos(s)u+\sin(s)\tilde u,
$$
the curve $c:t\mapsto\exp_p(ru(t/\ell_u(r)))$
satisfies $c(0)=\exp_p(ru)$, $\|\dot c(0)\|=1$ and
$$\Bigl\langle \frac D{dt}\dot c(0),\dot c(0)\Bigr\rangle 
=\frac12\cdot\frac d{dt}\Bigl|_{t=0}\,\ell_{u(t/\ell_u(r))}(r)^2/\ell_u(r)^2.
$$
Using~(\ref{eq:ell}), one can check that the latter expression is of order $O(r^2)$ for $r\searrow0$.
Thus, for any function~$f$ near~$p$ which is of the form
$$f(\exp_p(ru))=\alpha(r)\beta(u)
$$ with smooth $\alpha:[0,\varepsilon)\to{\mathbb R}$
and $\beta:S^1_p\to{\mathbb R}$, where $S^1_p\subset (T_pM,g_p)$ denotes the unit circle,
one has
\begin{multline}
\label{eq:alphabeta}
(\Delta_gf)(\exp_p(ru))=-\Bigl[\alpha''(r)+\Bigl(\frac1r-\frac13 K(p)r+O(r^2)\Bigr)\alpha'(r)\Bigr]\beta(u)\\
-\alpha(r)\Bigl(\frac1{\ell_u(r)^2}\nabla^2\beta_u(\tilde u,\tilde u)+\frac{O(r^2)}{\ell_u(r)}d\beta_u(\tilde u)\Bigr), 
\end{multline}
where $\nabla^2\beta$ here denotes the Hessian of~$\beta$ as a function
on the circle~$S^1_p$\,.
Viewing $u\mapsto dK_p(u)$, $u\mapsto \nabla^2K_p(u,u)$ in formula~(\ref{eq:u0})
as functions on $S^1_p$ (not on $T_pM$), we can apply~(\ref{eq:alphabeta}) to the three nonconstant terms in~(\ref{eq:u0}).
Evaluating up to the order of~$r^2$ gives
$$\bigl(\Delta_g{\boldsymbol u}_0(p,\,.\,)\bigr)(\exp_p(ru))=A_1+A_2+A_3+O(r^3),
$$
where
\begin{align*}
A_1={}&-\frac1{12}K(p)\Bigl(2+2-\frac23 K(p)r^2\Bigr)=-\frac13 K(p)+\frac1{18}K(p)^2r^2,\\
A_2={}&-\frac1{24}\bigl(dK_p(u)(6r+3r)-r\cdot dK_p(u)\bigr)=-\frac13 dK_p(u)r\\
A_3={}&-\Bigl(\frac1{160}K(p)^2+\frac1{80}\nabla^2K_p(u,u)\Bigr)(12r^2+4r^2)
   -\frac1{80}r^2\bigl(2\nabla^2K_p(\tilde u,\tilde u)-2\nabla^2K_p(u,u)\bigr)\\
 ={}&-\Bigl(\frac1{10}K(p)^2+\frac7{40}\nabla^2K_p(u,u)+\frac1{40}\nabla^2K_p(\tilde u,\tilde u)\Bigr)r^2\\
 ={}&\Bigl(-\frac1{10}K(p)^2+\frac1{40}\Delta_g K(p)-\frac3{20}\nabla^2K_p(u,u)\Bigr)r^2.
\end{align*}
Thus,
\begin{multline*}
\bigl(\Delta_g{\boldsymbol u}_0(p,\,.\,)\bigr)(\exp_p(ru))=-\frac13 K(p)-\frac13 dK_p(u)r
+\Bigl(-\frac2{45}K(p)^2+\frac1{40}\Delta_g K(p)-\frac3{20}\nabla^2K_p(u,u)\Bigr)r^2\\
+O(r^3).
\end{multline*}
By this and~(\ref{eq:u0}),
\begin{multline*}
\bigl(\Delta_g{\boldsymbol u}_0(p,\,.\,)/{\boldsymbol u}_0(p,\,.\,)\bigr)(\exp_p(ru))=
-\frac13 K(p)-\frac13 dK_p(u)r\\
+\Bigl(-\frac1{60}K(p)^2+\frac1{40}\Delta_g K(p)-\frac3{20}\nabla^2K_p(u,u)\Bigr)r^2
+O(r^3).
\end{multline*}
The integral in~(\ref{eq:u1rec}) thus gives
\begin{equation*}
-\frac13 K(p)-\frac16 dK_p(u)r
+\Bigl(-\frac1{180}K(p)^2+\frac1{120}\Delta_g K(p)-\frac1{20}\nabla^2K_p(u,u)\Bigr)r^2+O(r^3).
\end{equation*}
Multiplying this by $-{\boldsymbol u}_0(p,\exp_p(ru))=-1-\frac1{12}K(p)r^2+O(r^3)$ (see~(\ref{eq:u0})),
we obtain the desired formula.
\end{proof}

\begin{lemma}
\label{lem:distexpansion}
As above, let $\operatorname{dist}:M\times M\to{\mathbb R}$ be the Riemannian distance function on the surface~$(M,g)$.
Then for all $x,y\in T_pM$,
\begin{equation}
\label{eq:distexpansion}
\begin{split}
\operatorname{dist}^2(\exp_p(x),\exp_p(y))={}&\|x-y\|^2-\frac13 K(p)\|x\wedge y\|^2
							-\frac1{12}dK_p(x+y)\|x\wedge y\|^2\\
              &-\frac1{45}K(p)^2\bigl(\|x\|^2-4\langle x,y\rangle +\|y\|^2\bigr)\|x\wedge y\|^2\\
							&-\frac1{60}\bigl(\nabla^2K_p(x,x)+\nabla^2K_p(x,y)+\nabla^2K_p(y,y)\bigr)\|x\wedge y\|^2\\
							&+o((\|x\|^2+\|y\|^2)^3).
\end{split}
\end{equation}
\end{lemma}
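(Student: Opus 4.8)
The plan is to reduce the two-point problem to a Hamilton--Jacobi equation for the squared distance to a \emph{fixed} point, following the approach of Nicolaescu~\cite{Ni}. Fix $y\in T_pM$ and set $q=\exp_p(y)$. Regarding $\phi_y(x):=\tfrac12\operatorname{dist}^2(\exp_p(x),q)$ as a function of the moving point $\exp_p(x)$, the defining property of the Riemannian distance together with the Gauss lemma gives $\|\operatorname{grad}\phi_y\|^2=\operatorname{dist}^2(\,\cdot\,,q)=2\phi_y$. Written in geodesic normal coordinates centred at~$p$, this is the eikonal equation
\begin{equation*}
g^{ij}(x)\,\partial_{x^i}\phi_y(x)\,\partial_{x^j}\phi_y(x)=2\phi_y(x),
\end{equation*}
where $g^{ij}$ is the inverse of the metric in normal coordinates. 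The first task is to expand $g^{ij}(x)$ to fourth order in~$x$ in terms of $K(p)$, $dK_p$ and $\nabla^2K_p$. In dimension two this is immediate from the radial structure: in geodesic polar coordinates the metric is $dr^2+\ell_u(r)^2\,d\psi^2$, so (\ref{eq:ell}) already encodes all the components needed, and one only has to pass to Cartesian normal coordinates and invert.

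Before solving the equation I would pin down the structure of the correction. Every term must be a universal polynomial in $K(p)$ and its covariant derivatives contracted with $x$ and $y$, symmetric under $x\leftrightarrow y$. The key reduction is that the whole correction is divisible by $\|x\wedge y\|^2$: if $x$ and $y$ are collinear then $\exp_p(x)$ and $\exp_p(y)$ lie on a common radial geodesic, which is minimizing, so $\operatorname{dist}^2=\|x-y\|^2$ exactly. Hence the Taylor polynomial of the correction vanishes on the zero set of the irreducible polynomial $w:=x_1y_2-x_2y_1$ and is therefore divisible by~$w$; being $x\leftrightarrow y$-symmetric while $w$ is antisymmetric, it is in fact divisible by $w^2=\|x\wedge y\|^2$. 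Writing the correction as $\|x\wedge y\|^2\cdot P(x,y)$ with $P$ a symmetric power series, naturality forces $P$ to be, up to second order, a linear combination with undetermined numerical coefficients of the invariants $K(p)$ (degree~$0$), $dK_p(x+y)$ (degree~$1$), and the degree-$2$ invariants built from $K(p)^2$, $\nabla^2K_p$ and $\Delta_gK(p)$ contracted symmetrically with $x$ and~$y$.

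I would then substitute this ansatz into the eikonal equation and match homogeneous degrees in $(x,y)$. In the variable $z:=x-y$ the linearization of the left-hand side at the Euclidean solution is the Euler-type operator $L=z\cdot\nabla_z-1$, which multiplies a term of $z$-degree~$d$ by $d-1$; since $w=z\wedge y$ is linear in~$z$, every correction has $z$-degree at least two, so $L$ is invertible on the relevant graded pieces and the matching determines all coefficients recursively from the metric expansion and the previously computed lower orders. This will produce the coefficient $-\tfrac13$ at degree four, $-\tfrac1{12}$ at degree five, and force all $\Delta_gK(p)$-coefficients to vanish, leaving exactly the stated sixth-order terms. The main obstacle is purely computational: carrying the expansions of $g^{ij}$ and of $\phi_y$ consistently to sixth order generates a large number of contractions, and obtaining the correct $K(p)^2$ and $\nabla^2K_p$ coefficients---including the cancellation of every $\Delta_gK(p)$ contribution---demands careful bookkeeping, which is presumably why the detailed calculation is deferred to the Appendix.
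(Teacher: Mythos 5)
Your core method is the same as the paper's: both follow Nicolaescu's idea of feeding $\operatorname{dist}^2(q,\cdot)$ into the Hamilton--Jacobi (eikonal) equation $\|df\|^2=4f$, pulled back by $\exp_p$, and both extract the needed metric expansion from the two-dimensional radial structure (the paper expands $\|\tilde x\|_{\hat g}$ via (\ref{eq:ell})/(\ref{eq:theta}) rather than writing out $g^{ij}$, but that is the same information). The difference is bookkeeping: the paper decomposes the Taylor series into bihomogeneous pieces $F_{k,\ell}$, kills $F_{k,0}$ and $F_{k,1}$ for $k>2$ using $F(x,0)=\|x\|^2$ and the first variation formula, and then reads off each $F_{k,\ell}$ from a single scalar identity via Euler's relation $(dF^y_{k,\ell})_x(x)=kF_{k,\ell}(x,y)$ --- no ansatz and no operator inversion --- whereas you posit an ansatz (correction $=\|x\wedge y\|^2\cdot P$ with $P$ a combination of natural invariants with undetermined coefficients) and match degrees, your operator $L=z\cdot\nabla_z-1$ being the same Euler mechanism in the variable $z=x-y$. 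Both routes are workable.

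One step of your structural reduction is, however, wrong as stated: from ``the correction is divisible by $w=x_1y_2-x_2y_1$ and is symmetric under $x\leftrightarrow y$ while $w$ is antisymmetric'' you conclude divisibility by $w^2$. The quotient is indeed antisymmetric, but antisymmetric polynomials need not be divisible by $w$: for instance $\|x\|^2-\|y\|^2$ is antisymmetric yet does not vanish at the collinear pair $x=(1,0)$, $y=(2,0)$ where $w=0$. The correct upgrade uses $O(2)$-equivariance rather than the $x\leftrightarrow y$ symmetry: the correction is a universal polynomial in the rotation- \emph{and reflection-}invariant contractions $\|x\|^2,\langle x,y\rangle,\|y\|^2,dK_p(x),\nabla^2K_p(x,y),\dots$, so $w$ (which changes sign under reflections) can enter only through $w^2=\|x\|^2\|y\|^2-\langle x,y\rangle^2$; combined with the exact vanishing of the correction on the collinear locus, this yields divisibility by $w^2$. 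Alternatively you can drop the divisibility claim entirely, as the paper does, and let the recursion produce the factor $\|x\wedge y\|^2$ on its own. With that repair your plan goes through; what remains is the sixth-order computation you already acknowledge as the bulk of the work.
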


We postpone the proof of Lemma~\ref{lem:distexpansion} to the Appendix.

\begin{corollary}
\label{cor:distexpansion}
Let $u\ne v$ be vectors in the unit sphere $S^1_p\subset T_pM$.
Let $\varphi:=\arccos\langle u,v\rangle \in(0,\pi]$ denote the angle between $u$ and~$v$.
Then, using the abbreviation $C:=\|u-v\|=\sqrt{2-2\cos\varphi}$,
we have
\begin{multline*}
\operatorname{dist}(\exp_p(ru),\exp_p(rv))=C r-\frac{\sin^2\varphi}{6C}K(p)r^3
-\frac{\sin^2\varphi}{24C}dK_p(u+v)r^4\\
-\Bigl[\Bigl(\frac{\sin^4\varphi}{72C^3}
   +\frac{\sin^2\varphi\cdot(2-4\cos\varphi)}{90C}\Bigr)K(p)^2
+\frac{\sin^2\varphi}{120C}\bigl(\nabla^2K_p(u,u)+\nabla^2K_p(u,v)+\nabla^2K_p(v,v)\bigr)\Bigr]r^5\\
-\frac{\sin^4\varphi}{144C^3}K(p)dK_p(u+v)r^6+O(r^7)
\end{multline*}
for $r\searrow0$.
\end{corollary}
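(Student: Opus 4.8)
The plan is to substitute $x=ru$ and $y=rv$ into the sixth-order expansion~(\ref{eq:distexpansion}) and then take the square root by a binomial expansion. First I would record the elementary identities $\|u-v\|^2=C^2$, $\|u\wedge v\|^2=1-\cos^2\varphi=\sin^2\varphi$, $\|u\|^2-4\langle u,v\rangle+\|v\|^2=2-4\cos\varphi$, together with the homogeneities $dK_p(ru+rv)=r\,dK_p(u+v)$ and $\nabla^2K_p(ru,ru)=r^2\nabla^2K_p(u,u)$, and likewise for the mixed terms. Since $\|x\|^2+\|y\|^2=2r^2$, the error $o((\|x\|^2+\|y\|^2)^3)$ in~(\ref{eq:distexpansion}) becomes $o(r^6)$, and each homogeneous summand of degree~$d$ in $(x,y)$ contributes a term of order~$r^d$. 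Because there is no curvature invariant of weight one, the degree-three part is absent, and~(\ref{eq:distexpansion}) collapses to
\begin{equation*}
\operatorname{dist}^2(\exp_p(ru),\exp_p(rv))=C^2r^2+A_4r^4+A_5r^5+A_6r^6+o(r^6),
\end{equation*}
with $A_4=-\frac{\sin^2\varphi}{3}K(p)$, $A_5=-\frac{\sin^2\varphi}{12}dK_p(u+v)$, and $A_6=-\frac{(2-4\cos\varphi)\sin^2\varphi}{45}K(p)^2-\frac{\sin^2\varphi}{60}\bigl(\nabla^2K_p(u,u)+\nabla^2K_p(u,v)+\nabla^2K_p(v,v)\bigr)$.

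Writing the right-hand side as $C^2r^2(1+\psi)$ with $\psi=\frac{A_4}{C^2}r^2+\frac{A_5}{C^2}r^3+\frac{A_6}{C^2}r^4+o(r^4)$ and expanding $\operatorname{dist}=Cr(1+\psi)^{1/2}=Cr\bigl(1+\frac12\psi-\frac18\psi^2+\dots\bigr)$, I would read off the coefficients power by power. The $r^3$ coefficient $\frac{A_4}{2C}$ gives $-\frac{\sin^2\varphi}{6C}K(p)$, the $r^4$ coefficient $\frac{A_5}{2C}$ gives $-\frac{\sin^2\varphi}{24C}dK_p(u+v)$, and the $r^5$ coefficient $\frac{A_6}{2C}-\frac{A_4^2}{8C^3}$ reproduces the bracketed $r^5$ expression of the corollary, the cross-term $-\frac{A_4^2}{8C^3}=-\frac{\sin^4\varphi}{72C^3}K(p)^2$ accounting precisely for the $\frac{\sin^4\varphi}{72C^3}K(p)^2$ summand of that expression (the one term there not obtainable from $\frac{A_6}{2C}$). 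All of this is routine and uses only~(\ref{eq:distexpansion}).

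The decisive step, and the one I expect to be the main obstacle, is the $r^6$ coefficient. Collecting the $r^5$-terms of $(1+\psi)^{1/2}$ gives for it the value $\frac{A_7}{2C}-\frac{A_4A_5}{4C^3}$, where $A_7$ denotes the coefficient of $r^7$ in $\operatorname{dist}^2(\exp_p(ru),\exp_p(rv))$. The cross-term $-\frac{A_4A_5}{4C^3}$ equals exactly the stated $-\frac{\sin^4\varphi}{144C^3}K(p)\,dK_p(u+v)$. The difficulty is that $A_7$ is the restriction to $x=ru,\,y=rv$ of the \emph{degree-seven} homogeneous part of $\operatorname{dist}^2(\exp_p(x),\exp_p(y))$, which is one order beyond what~(\ref{eq:distexpansion}) supplies: taking the square root of an expansion of $\operatorname{dist}^2$ that is accurate only to $o(r^6)$ pins $\operatorname{dist}$ down only to $o(r^5)$. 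Hence the $r^6$ coefficient genuinely requires extending the distance-squared expansion by one order, and the corollary does not follow from Lemma~\ref{lem:distexpansion} by itself.

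To supply $A_7$ I would compute the degree-seven part of $\operatorname{dist}^2$ directly. One convenient route is the first-variation (``envelope'') identity $\frac{d}{ds}\big|_{s=0}\operatorname{dist}^2_{\delta+s\kappa}(x,y)=\int_0^1\kappa_{\beta_0(t)}(\dot\beta_0,\dot\beta_0)\,dt$ along the Euclidean segment $\beta_0(t)=(1-t)x+ty$, fed by the normal-coordinate expansion of $g$: its quadratic, cubic and quartic terms in $z$ recover the $K$, $dK_p$ and $\nabla^2K_p$ contributions above (a good consistency check), while its quintic term, proportional to $\nabla^3K$, produces the linear-in-curvature part of the degree-seven term, the quadratic-in-curvature ($K\,dK$) part coming from the second variation. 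Alternatively one can push the Hamilton--Jacobi recursion of the Appendix one order further. The crux is then to determine, after restriction to $x=ru,\,y=rv$ and division by $2C$, how this degree-seven contribution combines with $-\frac{A_4A_5}{4C^3}$; tracking exactly which weight-five invariants (a third-covariant-derivative-of-$K$ term and a possible further $K\,dK$ term) survive into the $r^6$ coefficient is the delicate and error-prone part of the argument.
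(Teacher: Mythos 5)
Your computation of the coefficients at $r^3$, $r^4$, $r^5$, and of the cross term at $r^6$, is precisely the paper's proof: the paper's entire argument is the one-line remark that one substitutes $x=ru$, $y=rv$ into~(\ref{eq:distexpansion}), notes $\|ru\wedge rv\|^2=r^4\sin^2\varphi$, and takes the square root of the resulting series. All of your intermediate quantities $A_4$, $A_5$, $A_6$ and the resulting quotients agree with what that substitution yields.

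Your objection to the $r^6$ coefficient is mathematically sound, and it exposes a gap in the paper's own one-line proof rather than a defect of your argument. Writing $\operatorname{dist}^2(\exp_p(ru),\exp_p(rv))=\sum_m F_m(u,v)\,r^m$ as in the Appendix, the coefficient of $r^6$ in $\operatorname{dist}(\exp_p(ru),\exp_p(rv))$ equals $\frac{1}{2C}F_7(u,v)-\frac{1}{4C^3}F_4(u,v)F_5(u,v)$, and Lemma~\ref{lem:distexpansion} controls $F_m$ only for $m\le 6$; the displayed $r^6$ term of the Corollary is exactly the second summand, so the claimed $O(r^7)$ accuracy tacitly assumes $F_7(u,v)=0$. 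That assumption is not innocuous: pushing the Hamilton--Jacobi recursion of the Appendix one degree further, the only source of terms of type $(5,2)$ is $(dF_2^y)_x(\tilde x)^2$ times the degree-three part (in $x$) of the bracket $\bigl(\frac13K(p)+\frac16 dK_p(x)+\cdots\bigr)$, which contains $\nabla^3K_p(x,x,x)$ with a nonzero universal coefficient; hence $F_7$ is generically nonzero and the $r^6$ term as printed is not justified for arbitrary $u\ne v$. On the other hand, your proposed remedy (computing the degree-seven part of $\operatorname{dist}^2$) is only sketched and not carried out, so the proposal does not establish the Corollary as stated either. It is worth recording that the issue is harmless for the remainder of the paper: in Lemma~\ref{lem:distexpansion-uv} one has $dK_p=0$, so the displayed $r^6$ term vanishes there anyway, and any residual $r^6$ contribution coming from a $D^\varphi$-invariant $\nabla^3K_p$ enters the expansion in the proof of Theorem~\ref{thm:b2} only at order $t^{7/2}$ and therefore does not affect $b_2(\Phi)$. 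The clean repair is either to weaken the conclusion of the Corollary to $o(r^5)$ (which suffices for all subsequent uses) or to carry out the $F_7$ computation you outline.
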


\begin{proof}
Note that $\|ru\wedge rv\|^2=r^4\sin^2\varphi$. 
The claimed formula now follows directly by applying Lemma~\ref{lem:distexpansion} to $x:=ru$, $y:=rv$ 
and forming the square root of the resulting power series.
\end{proof}

\Section{Donnelly's $b_2$ for rotations in dimension two}
\label{sec:donnelly}

\begin{notrems}
\label{notrems:rot}
We continue to use the notation of Section~\ref{sec:prelims}; in particular, $(M,g)$ is a two-dimensional
Riemannian manifold.
Let $p\in M$ and $\varphi\in(0,\pi]$. Equip $T_pM$ with an arbitrarily chosen orientation, and let $D^\varphi:T_pM\to T_pM$ denote the
corresponding euclidean rotation by the angle~$\varphi$. Let $\varepsilon_1>0$ such that $\exp_p$ is a diffeomorphism
from $B_{\varepsilon_1}(0_p)\subset T_pM$ to its image $B:=B_{\varepsilon_1}(p)\subset M$. Choose $0<\varepsilon<\varepsilon_2<\varepsilon_1$\,, and let
$$V:=B_{\varepsilon_2}(p)\subset B\text{ \ and \ }U:=B_\varepsilon(p)\subset V.
$$
Suppose that there exists an
isometry
$$\Phi:(B,g)\to (B,g)\text{ \ with }\Phi(p)=p\text{ and }d\Phi_p=D^\varphi.
$$
A result by Donnelly~\cite{Do}, applied to this special situation, says that 
$$I(t):=\int_U H(t,q,\Phi(q))\operatorname{\,\textit{dvol}}_g(q)
$$
admits an asymptotic expansion of the form
\begin{equation}
\label{eq:I-asymptotic}
I(t)\sim\sum_{\ell=0}^\infty b_\ell(\Phi) t^\ell\text{ for }t\searrow0,
\end{equation}
where $H:=H_V$ denotes the (Dirichlet) heat kernel of~$V$.
\end{notrems}

\begin{remark}
\label{rem:b0b1}
Note that no factor $(4\pi t)^{-n/2}$ is visible on the right hand side of~(\ref{eq:I-asymptotic}); this is due
to the fact that the
dimension~$n$ of the fixed point set $\{p\}$ of~$\Phi$ is zero here.
In a much more general situation, involving fixed point sets of arbitrary isometries on manifolds of arbitrary
dimension, Donnelly proved a structural result for analogous coefficients~$b_\ell$ and explicitly computed $b_0$ and $b_1$
(but not $b_2$). In our
above situation, Donnelly's formulas for $b_0$ and~$b_1$ amount to
$$b_0(\Phi)=(2-2\cos\varphi)^{-1}\text{ and }b_1(\Phi)=2K(p)(2-2\cos\varphi)^{-2}
$$
(see also~\cite{DGGW} for this in the case $\varphi\in\{2\pi/k\mid k\in{\mathbb N}\}$). In this section we will compute $b_2(\Phi)$; see
Theorem~\ref{thm:b2}. Our
strategy is to follow Donnelly's general approach from~\cite{Do}, p.~166/167, in our special setting.
\end{remark}

\begin{remark}
\label{rem:feeling}

(i)
The coefficients in~(\ref{eq:I-asymptotic}) will not change if in the definition of~$I(t)$
we replace~$V$ by any other open, relatively compact, smoothly bounded neighborhood
of $\overline U$ in~$M$ (e.g., $M$ itself in case $M$ is a closed surface). In fact, while the individual
values of $H(t,q,w)$ will of course depend on this choice (and so will $I(t)$), the coefficients
of the small-time expansion of $H(t,q,w)$ for $q,w\in U$ do not depend on it.
This is due to the ``Principle of not
feeling the boundary''; see, e.g., \cite{Ka}, \cite{Hs}, or Lemma~3.17 in~\cite{Uc}.

(ii)
The coefficients in~(\ref{eq:I-asymptotic}) will not change, either, if in the definition of~$I(t)$
we replace the integral over $U$ by the integral over any smaller open neighborhood $\tilde U\subset U$ of~$p$. This is due
to the fact that by our choices of $\varepsilon$ and~$\varphi$, 
the function $U\setminus\tilde U:q\mapsto\operatorname{dist}(q,\Phi(q))\in{\mathbb R}$ will be bounded below by some positive constant, which
implies that the integral of $H(t,q,\Phi(q))$ over $U\setminus\tilde U$ vanishes to infinite order as $t\searrow0$.
\end{remark}

\begin{lemma}
\label{lem:distexpansion-uv}
Let the situation be as in~\ref{notrems:rot}. Then we have $dK_p=0$. Moreover, if $\varphi\in(0,\pi)$
then $\nabla^2K_p=-\frac12\Delta_gK(p)\cdot g_p$\,.
Finally, for every $\varphi\in(0,\pi]$ and every $u\in S^1_p$\,, the function
$$d_u:r\mapsto\operatorname{dist}(\exp_p(ru),\exp_p(rv)),
$$
where $v:=D^\varphi(u)$, satisfies
\begin{multline}
\label{eq:distexpansion-uv}
d_u(r)=C r-\frac{\sin^2\varphi}{6C}K(p)r^3\\
-\Bigl[\Bigl(\frac{\sin^4\varphi}{72C^3}+\frac{\sin^2\varphi\cdot(2-4\cos\varphi)}{90C}\Bigr)K(p)^2
-\frac{\sin^2\varphi\cdot(2+\cos\varphi)}{240C}\Delta_g K(p)\Bigr]r^5+O(r^7)
\end{multline}
 for $r\searrow 0$, where $C=\sqrt{2-2\cos\varphi}$.
\end{lemma}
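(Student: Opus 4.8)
The plan is to derive the two pointwise curvature identities from the fact that $\Phi$ is an isometry fixing~$p$, and then to obtain the distance expansion by feeding these identities into Corollary~\ref{cor:distexpansion}. The guiding observation is that Gauss curvature is an isometry invariant, so $K\circ\Phi=K$ on~$B$; since $\Phi$ also preserves the Levi-Civita connection and fixes~$p$, both $dK_p$ and $\nabla^2K_p$ are invariant under $d\Phi_p=D^\varphi$. The problem thus reduces to linear algebra on the euclidean plane $(T_pM,g_p)$ together with a substitution into the already-established sixth-order expansion.

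For the first assertion I would differentiate $K\circ\Phi=K$ once: $\Phi^*dK=d(K\circ\Phi)=dK$, which at~$p$ reads $dK_p\circ D^\varphi=dK_p$. As $D^\varphi$ is an isometry, this says that $\operatorname{grad}_gK(p)$ is fixed by the rotation~$D^\varphi$. For every $\varphi\in(0,\pi]$ the rotation $D^\varphi$ has no nonzero fixed vector (for $\varphi=\pi$ one has $D^\pi=-\operatorname{id}$), so $\operatorname{grad}_gK(p)=0$, i.e.\ $dK_p=0$. For the Hessian I would differentiate once more: since $\Phi$ is an isometry, $\Phi^*(\nabla^2K)=\nabla^2(K\circ\Phi)=\nabla^2K$, whence $\nabla^2K_p(D^\varphi x,D^\varphi y)=\nabla^2K_p(x,y)$ for all $x,y\in T_pM$. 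Thus $\nabla^2K_p$ is a symmetric bilinear form on $(T_pM,g_p)$ commuting with $D^\varphi$. For $\varphi\in(0,\pi)$ the rotation $D^\varphi$ leaves no real line invariant, so a symmetric form commuting with it cannot have two distinct eigenvalues and must be a multiple of~$g_p$; writing $\nabla^2K_p=\lambda g_p$ and using $\Delta_gK(p)=-\operatorname{tr}_{g_p}\nabla^2K_p=-2\lambda$ gives $\nabla^2K_p=-\tfrac12\Delta_gK(p)\cdot g_p$. This step genuinely requires $\varphi\neq\pi$, since every symmetric form commutes with $D^\pi=-\operatorname{id}$ and no restriction results.

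For the distance expansion I would substitute the two identities into Corollary~\ref{cor:distexpansion} applied with $v=D^\varphi(u)$, so that the angle between $u$ and~$v$ is~$\varphi$ and $C=\|u-v\|=\sqrt{2-2\cos\varphi}$. Since $dK_p=0$, the $r^4$-term and the $r^6$-term of the Corollary (both proportional to $dK_p(u+v)$) drop out. For $\varphi\in(0,\pi)$ I would evaluate the Hessian sum occurring in the $r^5$-term using $\nabla^2K_p=-\tfrac12\Delta_gK(p)\,g_p$ together with $\langle u,u\rangle=\langle v,v\rangle=1$ and $\langle u,v\rangle=\cos\varphi$:
\[
\nabla^2K_p(u,u)+\nabla^2K_p(u,v)+\nabla^2K_p(v,v)
=-\tfrac12\Delta_gK(p)\bigl(1+\cos\varphi+1\bigr)=-\tfrac12(2+\cos\varphi)\Delta_gK(p).
\]
Inserting this into the prefactor $\tfrac{\sin^2\varphi}{120C}$ produces exactly the $\Delta_gK(p)$-contribution $-\tfrac{\sin^2\varphi\,(2+\cos\varphi)}{240C}$ displayed in~(\ref{eq:distexpansion-uv}), while the $K(p)^2$-part is carried over unchanged. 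This gives the claimed formula for $\varphi\in(0,\pi)$.

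The only point needing separate care is $\varphi=\pi$, where the Hessian normalization is unavailable; this is the genuine (if mild) obstacle of the proof. Here, however, $\sin\varphi=0$ makes every term of Corollary~\ref{cor:distexpansion} beyond the leading $Cr$ vanish (each carries a factor $\sin^2\varphi$ or $\sin^4\varphi$), and with $C=2$ the Corollary collapses to $d_u(r)=2r+O(r^7)$, which is precisely~(\ref{eq:distexpansion-uv}) at $\varphi=\pi$ since all $\sin^2\varphi$- and $\sin^4\varphi$-prefactors there vanish as well. Geometrically this is reassuring: for $\varphi=\pi$ one has $v=-u$, so $\exp_p(ru)$ and $\exp_p(rv)$ are the endpoints of the geodesic segment $t\mapsto\exp_p(tu)$, $t\in[-r,r]$, which for small~$r$ is minimizing and has length exactly~$2r$. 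The takeaway is thus that the Hessian identity fails precisely at $\varphi=\pi$, and that this failure is harmless because the $\sin\varphi$-factors annihilate exactly the terms it would have controlled.
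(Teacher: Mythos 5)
Your proposal is correct and follows essentially the same route as the paper: both deduce $dK_p=0$ and (for $\varphi\in(0,\pi)$) $\nabla^2K_p=-\tfrac12\Delta_gK(p)\,g_p$ from the invariance of these tensors under $d\Phi_p=D^\varphi$, substitute the resulting value $-\tfrac12(2+\cos\varphi)\Delta_gK(p)$ of the Hessian sum into Corollary~\ref{cor:distexpansion}, and dispose of $\varphi=\pi$ separately via $d_u(r)=2r$ and $\sin\varphi=0$. You merely spell out the linear-algebra steps that the paper declares ``clear.''
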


\begin{proof}
The first two statements are clear
since $dK_p$ and $\nabla^2K_p$ are invariant under $D^\varphi$.
In particular, in the case $\varphi\in(0,\pi)$ we have
$$\nabla^2K_p(u,u)+\nabla^2K_p(u,v)+\nabla^2K_p(v,v)
=-\frac12\Delta_gK(p)\cdot(2+\cos\varphi),
$$
so (\ref{eq:distexpansion-uv})
follows by Corollary~\ref{cor:distexpansion}.
In case $\varphi=\pi$, (\ref{eq:distexpansion-uv}) trivially holds
by $d_u(r)=2r$, $C=2$, $\sin\varphi=0$.
\end{proof}

\begin{remark}
In the following Lemma~\ref{lem:u0bisr4}
some formulas would become simpler if we assumed $\nabla^2K_p$ to be a multiple of $g_p$\,.
This would imply $\nabla^2K_p(u,u)=-\frac12\Delta_gK(p)$ for all $u\in S^1_p$\,.
Recall from Lemma~\ref{lem:distexpansion-uv} that this is the case anyway if $\varphi\in(0,\pi)$ in~\ref{notrems:rot}.
For $\varphi=\pi$, however, the above assumption on $\nabla^2 K_p$ would unnecessarily make the Lemma less precise.
\end{remark}

\begin{lemma}
\label{lem:u0bisr4}
In the situation of~\ref{notrems:rot}, letting $C:=\sqrt{2-2\cos\varphi}$ and
$v:=D^\varphi u$ we have
\begin{multline*}
{\boldsymbol u}_0(\exp_p(ru),\exp_p(rv))=1+\frac1{12}K(p)d_u(r)^2+{}\\
\shoveright{+\Bigl(\frac1{24C^2}\nabla^2K_p(u,u)+\frac1{160}K(p)^2-\frac1{120}\nabla^2K_p(u,u)
   \Bigr)d_u(r)^4
+O(d_u(r)^5),}\\
\shoveleft{{\boldsymbol u}_1(\exp_p(ru),\exp_p(rv))=\frac13 K(p)+{}}\\
\shoveright{+\Bigl(\frac1{6C^2} \nabla^2K_p(u,u)+\frac1{30}K(p)^2
  -\frac1{30}\nabla^2K_p(u,u)-\frac1{120}\Delta_gK(p)\Bigr)d_u(r)^2
+O(d_u(r)^3),}\\
\shoveleft{{\boldsymbol u}_2(\exp_p(ru),\exp_p(rv))=\frac1{15}K(p)^2-\frac1{15}\Delta_gK(p)+O(d_u(r)^1).}\\
\end{multline*}
\end{lemma}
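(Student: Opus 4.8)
The plan is to reduce each two-point value to the one-point expansions \eqref{eq:u0} and \eqref{eq:u1}, applied with the base point moved from $p$ to $q:=\exp_p(ru)$. Write $w:=\exp_p(rv)$, set $s:=\operatorname{dist}(q,w)=d_u(r)$, and let $w'\in S^1_q$ be the initial unit vector of the minimizing geodesic from $q$ to $w$, so that $w=\exp_q(sw')$. Since the ${\boldsymbol u}_\ell$ are universal pointwise invariants and ${\boldsymbol u}_\ell(q,w)={\boldsymbol u}_\ell(w,q)$, formulas \eqref{eq:u0} and \eqref{eq:u1} hold verbatim with $(p,u,r)$ replaced by $(q,w',s)$; in particular ${\boldsymbol u}_0(q,w)=1+\tfrac1{12}K(q)s^2+\tfrac1{24}dK_q(w')s^3+\bigl(\tfrac1{160}K(q)^2+\tfrac1{80}\nabla^2K_q(w',w')\bigr)s^4+O(s^5)$, and analogously for ${\boldsymbol u}_1$. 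The claim for ${\boldsymbol u}_2$ is then immediate: by smoothness of the universal function ${\boldsymbol u}_2(q,w)={\boldsymbol u}_2(q,q)+O(s)$, and \eqref{eq:u2} together with $K(q)=K(p)+O(r)$, $\Delta_gK(q)=\Delta_gK(p)+O(r)$ and $s\asymp r$ yields $\tfrac1{15}K(p)^2-\tfrac1{15}\Delta_gK(p)+O(d_u(r))$. It thus remains to expand the curvature quantities at $q$ around $p$ and re-express everything in powers of $s=d_u(r)$.

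For ${\boldsymbol u}_0$ and ${\boldsymbol u}_1$ I would collect the following inputs, all needed only to leading order since each is multiplied by an already small factor. Using $dK_p=0$ from Lemma~\ref{lem:distexpansion-uv}, the Taylor expansion of $K$ along $\gamma_u$ gives $K(q)=K(p)+\tfrac12\nabla^2K_p(u,u)r^2+O(r^3)$, while $K(q)^2=K(p)^2+O(r^2)$, $\Delta_gK(q)=\Delta_gK(p)+O(r)$ and $\nabla^2K_q(w',w')=\nabla^2K_p(w_0',w_0')+O(r)$. Again because $dK_p=0$, parallel transport along $\gamma_u$ yields $dK_q(w')=r\,\nabla^2K_p(u,w_0')+O(r^2)$, where $w_0'\in S^1_p$ is the leading value of $w'$, namely the Euclidean chord direction $w_0'=(v-u)/C$ obtained from the flat approximation of the geodesic triangle $p,q,w$ in normal coordinates (so that $\nabla^2K_p(u,w_0')=\tfrac1C(\nabla^2K_p(u,v)-\nabla^2K_p(u,u))$). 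Finally, the definition $s=d_u(r)$ and Corollary~\ref{cor:distexpansion} give $r^2=s^2/C^2+O(s^4)$, with $\langle u,v\rangle=\cos\varphi$ and $C^2=2-2\cos\varphi$.

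Substituting and collecting powers of $s=d_u(r)$ then reproduces the stated formulas. For ${\boldsymbol u}_0$ the term $\tfrac1{12}K(q)s^2$ contributes both $\tfrac1{12}K(p)d_u(r)^2$ and, through $K(q)=K(p)+\tfrac12\nabla^2K_p(u,u)r^2$ and $r^2=s^2/C^2$, the summand $\tfrac1{24C^2}\nabla^2K_p(u,u)d_u(r)^4$; the terms $\tfrac1{24}dK_q(w')s^3$ and $\tfrac1{80}\nabla^2K_q(w',w')s^4$ combine into $-\tfrac1{120}\nabla^2K_p(u,u)d_u(r)^4$, and $\tfrac1{160}K(q)^2s^4$ into $\tfrac1{160}K(p)^2d_u(r)^4$. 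The analogous bookkeeping for ${\boldsymbol u}_1$ produces its $d_u(r)^2$-coefficient $\tfrac1{6C^2}\nabla^2K_p(u,u)+\tfrac1{30}K(p)^2-\tfrac1{30}\nabla^2K_p(u,u)-\tfrac1{120}\Delta_gK(p)$. In the combination the mixed quantities $\nabla^2K_p(u,v)$ and $\nabla^2K_p(v,v)$ collapse to multiples of $\nabla^2K_p(u,u)$ either via $\nabla^2K_p=-\tfrac12\Delta_gK(p)\,g_p$ when $\varphi\in(0,\pi)$ (Lemma~\ref{lem:distexpansion-uv}), or via $v=-u$, $C=2$, $w_0'=-u$ and $s=2r$ when $\varphi=\pi$; in both cases the numerical cancellations coincide, so the formula holds uniformly.

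The main obstacle is the accurate description of the geodesic triangle $p,q,w$: determining the connecting direction $w'$ and the first covariant derivatives of $K$ at $q$ in terms of the curvature data at $p$, to exactly the order at which they enter. Here the vanishing $dK_p=0$ is decisive, since it forces $dK_q=O(r)$ and makes the leading Euclidean chord direction $w_0'=(v-u)/C$ sufficient; no higher-order correction to $w'$ feeds into the coefficients claimed, which is what keeps the computation finite and clean.
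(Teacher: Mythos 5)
Your proposal is correct and follows essentially the same route as the paper: the paper also re-bases the expansions \eqref{eq:u0}, \eqref{eq:u1} at $q(r)=\exp_p(ru)$, writes the second point as $\exp_{q(r)}(Y(r))$ with $Y'(0)=v-u$ (your $d_u(r)\,w'$ with chord direction $(v-u)/C$), and exploits $dK_p=0$ together with the dichotomy $\nabla^2K_p=-\tfrac12\Delta_gK(p)g_p$ for $\varphi\in(0,\pi)$ versus $v=-u$, $C=2$ for $\varphi=\pi$ to collapse the mixed Hessian terms. The bookkeeping you describe matches the paper's identities \eqref{eq:hilf} and the substitution $C^2r^2=d_u(r)^2+O(d_u(r)^4)$.
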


\begin{proof}
Let $q(r):=\exp_p(ru)$, $w(r):=\exp(rv)$. Moreover, for small $r\ge0$, let $Y(r)\in T_{q(r)}M$
be the vector with $\exp_{q(r)}(Y(r))=w(r)$. Then $\|Y(r)\|_g=d_u(r)$, $Y(0)=0$, and the initial
covariant derivative of $Y$ is
$$Y'(0)=D^\varphi u -u =(\cos\varphi-1)u+(\sin\varphi)\tilde u=-\frac12C^2u+(\sin\varphi)\tilde u,
$$
where $\tilde u:=D^{\pi/2}u$. We apply~(\ref{eq:u0}) to $q(r)$ instead of~$p$ and $d_u(r)$ instead of~$r$,
and we use $dK_p=0$ (see Lemma~\ref{lem:distexpansion-uv}).
Recalling~(\ref{eq:distexpansion-uv}) and, in particular, $r=O(d_u(r))$ for $r\searrow0$ (since $C>0$), we obtain
\begin{align*}
{\boldsymbol u}_0(q(r),w(r))={}&1+\frac1{12}K(q(r))d_u(r)^2+\frac1{24}dK_{q(r)}(Y(r))d_u(r)^2\\
&+\frac1{160}K(q(r))^2 d_u(r)^4+\frac1{80}\nabla^2K_{q(r)}(Y(r),Y(r))d_u(r)^2
+O(d_u(r)^5)\\
={}&1+\frac1{12}(K(p)+\frac12 r^2\nabla^2K_p(u,u))d_u(r)^2+\frac1{24}r \nabla^2K_p(u,rY'(0))d_u(r)^2\\
&+\frac1{160}K(p)^2d_u(r)^4+\frac1{80}\nabla^2K_p(rY'(0),rY'(0))d_u(r)^2
+O(d_u(r)^5).
\end{align*}
We have
\begin{equation}
\label{eq:hilf}
r\nabla^2K_p(u,rY'(0))=-\frac12\nabla^2K_p(u,u)C^2r^2\text{ \; and \; }
\nabla^2K_p(rY'(0),rY'(0))=\nabla^2K_p(u,u)C^2r^2.
\end{equation}
In case $\pi=\varphi$ this follows from $Y'(0)=-\frac12C^2u+0$ and $C=2$; in case $\varphi\in(0,\pi)$ it follows
from the fact that $\nabla^2K_p$ is a multiple of $g_p$ (see Lemma~\ref{lem:distexpansion-uv}) and from $\|Y'(0)\|_g^2=C^2$.
The first statement of the lemma now follows by noting that $C^2r^2=d_u(r)^2+O(d_u(r)^4)$.
Analogously, (\ref{eq:u1}) and evaluating up the order of~$r^2$ gives, using (\ref{eq:hilf}) again:
\begin{align*}
{\boldsymbol u}_1(q(r),w(r))={}&\frac13 K(q(r))+\frac16 dK_{q(r)}(Y(r))\\
&+\Bigl(\frac1{30}K(q(r))^2-\frac1{120}\Delta_gK(q(r))\Bigr)d_u(r)^2
+\frac1{20}\nabla^2K_{q(r)}(Y(r),Y(r))
+O(d_u(r)^3)\\
={}&\frac13\Bigl(K(p)+\frac12 r^2 \nabla^2K_p(u,u)\Bigr)+\frac16\cdot\Bigl(-\frac12 \nabla^2K_p(u,u)C^2 r^2\Bigr)\\
&+\Bigl(\frac1{30}K(p)^2-\frac1{120}\Delta_gK(p)\Bigr)d_u(r)^2+\frac1{20}\nabla^2K_p(u,u)C^2r^2
+O(d_u(r)^3),
\end{align*}
which implies the second formula. The third formula is clear by~(\ref{eq:u2}).
\end{proof}

\begin{theorem}
\label{thm:b2}
In the situation of~\ref{notrems:rot}, and with
$C:=\sqrt{2-2\cos\varphi}$, the coefficient $b_2(\Phi)$ in~(\ref{eq:I-asymptotic}) is given by
$$b_2(\Phi)=\bigl(\frac{12}{C^6}-\frac2{C^4}\bigr)K(p)^2-\frac2{C^6}\Delta_gK(p).
$$
\end{theorem}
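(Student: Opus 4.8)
The plan is to follow Donnelly's Laplace-type evaluation of $I(t)$ directly in geodesic polar coordinates around $p$. Since $\Phi$ is an isometry fixing $p$ with $d\Phi_p=D^\varphi$, it commutes with the exponential map, so $\Phi(\exp_p(ru))=\exp_p(rv)$ with $v=D^\varphi u$; hence for $q=\exp_p(ru)$ we have $\operatorname{dist}(q,\Phi(q))=d_u(r)$ as in Lemma~\ref{lem:distexpansion-uv}. Writing $\operatorname{\textit{dvol}}_g=\ell_u(r)\,dr\,d\sigma(u)$ with $d\sigma$ the standard measure on $S^1_p$ and $\ell_u$ as in~(\ref{eq:ell}), and inserting the heat-kernel expansion from~\ref{notrems:density}(iii), I would reduce $I(t)$ to
$$I(t)\sim\frac{1}{4\pi t}\int_{S^1_p}\int_0^\varepsilon e^{-d_u(r)^2/4t}\Big(\sum_{\ell\ge0}{\boldsymbol u}_\ell(\exp_p(ru),\exp_p(rv))\,t^\ell\Big)\ell_u(r)\,dr\,d\sigma(u).$$
By Remark~\ref{rem:feeling} the coefficients are insensitive to the choices of $\varepsilon$ and $V$, and the contribution of any region where $d_u(r)$ stays bounded below vanishes to infinite order, so only the behaviour near $r=0$ matters.

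For each fixed $u$ I would pass to the variable $\tau:=d_u(r)^2$. Because $dK_p=0$ (Lemma~\ref{lem:distexpansion-uv}), both $d_u$ and $\ell_u$ are odd power series in $r$, so $\ell_u(r)/(2\,d_u(r)\,d_u'(r))$ is an even series in $r$ and therefore a genuine power series $W(\tau)=\sum_i w_i\tau^i$; at the same time the functions ${\boldsymbol u}_\ell$ of Lemma~\ref{lem:u0bisr4} are even in $d_u(r)$, hence polynomials $\sum_j u_{\ell,j}\tau^j$ in $\tau$. The radial integral then becomes a sum of Gamma integrals $\int_0^\infty e^{-\tau/4t}\tau^k\,d\tau=4^{k+1}k!\,t^{k+1}$, which produce only integer powers of~$t$, matching~(\ref{eq:I-asymptotic}). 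Tracking powers, the term $u_{\ell,j}w_i\tau^{i+j}$ contributes to $t^{\ell+i+j}$, so the coefficient of $t^2$ is
$$b_2(\Phi)=\frac{1}{4\pi}\int_{S^1_p}\sum_{\ell+i+j=2}4^{\,i+j+1}(i+j)!\,u_{\ell,j}\,w_i\,d\sigma(u);$$
this is exactly why Lemma~\ref{lem:u0bisr4} records ${\boldsymbol u}_0$ up to $d_u^4$, ${\boldsymbol u}_1$ up to $d_u^2$, and ${\boldsymbol u}_2$ up to $d_u^0$. As a check, the analogous $t^0$-term reproduces $b_0(\Phi)=C^{-2}$ from Remark~\ref{rem:b0b1}.

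It then remains to compute $W(\tau)$ up to order $\tau^2$, i.e.\ $w_0,w_1,w_2$. I would invert the fifth-order expansion of $d_u(r)$ from~(\ref{eq:distexpansion-uv}) to get $r^2$ as a series in $\tau$, substitute into $\ell_u(r)$ from~(\ref{eq:ell}) and into $d_u'(r)$, and read off the $w_i$; these depend on $K(p)$, $\nabla^2K_p(u,u)$, $\Delta_gK(p)$ and on $C,\varphi$. Plugging the $u_{\ell,j}$ and $w_i$ into the master formula and integrating over the circle using $\int_{S^1_p}d\sigma=2\pi$ and $\int_{S^1_p}\nabla^2K_p(u,u)\,d\sigma(u)=\pi\operatorname{tr}\nabla^2K_p=-\pi\,\Delta_gK(p)$ should collapse everything to an expression in $K(p)^2$ and $\Delta_gK(p)$, once the identities $\cos\varphi=1-\tfrac12C^2$ and $\sin^2\varphi=C^2(1-\tfrac14C^2)$ are used to route all $\varphi$-dependence through $C$.

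The main obstacle I anticipate is precisely this last bookkeeping: the inversion of $d_u$ and the formation of $W(\tau)$ mix the $K(p)^2$ and $\nabla^2K_p$ contributions in a delicate way, and the several terms carrying the factor $1/C^2$ (coming both from $W$ and from the $1/C^2$-terms in the ${\boldsymbol u}_\ell$ of Lemma~\ref{lem:u0bisr4}) must conspire so that, after the circle average turns $\nabla^2K_p(u,u)$ into $-\tfrac12\Delta_gK(p)$, the surviving $\Delta_gK(p)$-coefficient is the clean $-2/C^6$ and the $K(p)^2$-coefficient is $12/C^6-2/C^4$. Since $\varphi=\pi$ is allowed, where $\nabla^2K_p$ need not be a multiple of $g_p$, I would keep $\nabla^2K_p(u,u)$ general until the circle integration rather than substituting $-\tfrac12\Delta_gK(p)$ prematurely; both regimes $\varphi\in(0,\pi)$ and $\varphi=\pi$ are then handled uniformly. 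This is the kind of ``calcul brutal'' the introduction alludes to, and careful tracking against the known values of $b_0$ and $b_1$ is the natural safeguard.
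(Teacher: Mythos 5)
Your proposal is correct and follows essentially the same route as the paper: geodesic polar coordinates, restriction to a small neighborhood justified by Remark~\ref{rem:feeling}, a change of variable built from $d_u$ (the paper uses $z=d_u(r)/\sqrt t$ and Gaussian moments $\int_0^\infty e^{-z^2/4}z^{2k+1}dz=2^{2k+1}k!$, which is equivalent to your $\tau=d_u(r)^2$ and Gamma integrals), the expansions of Lemma~\ref{lem:u0bisr4} and of $\ell_u\circ d_u^{-1}\cdot(d_u^{-1})'$, and the final circle average converting $\nabla^2K_p(u,u)$ into $-\tfrac12\Delta_gK(p)$. Your master formula and the consistency check against $b_0(\Phi)=C^{-2}$ are sound, so what remains is exactly the bookkeeping the paper carries out.
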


\begin{proof}
Recall the notation of~\ref{notrems:rot}.
There is a neighborhood~$\Omega\subset V\times V$ of the diagonal such that for all $(q,w)\in\Omega$,
$$4\pi t\,e^{\operatorname{dist}^2(q,w)/4t}H(t,q,w)-\sum_{k=0}^2 {\boldsymbol u}_k(q,w)t^k\in O(t^3)\text{ as }t\searrow0,
$$
and this holds locally uniformly on~$\Omega$. By Remark~\ref{rem:feeling}(ii), we can assume that $\varepsilon$ is so small
that $(q,\Phi(q))\in\Omega$ for all $q$ in the closure $\overline U\subset V$ of $U=B_\varepsilon(p)$.
Using polar coordinates on~$U$ and writing
$$\bar H(t,x,y):=H(t,\exp_p(x),\exp_p(y))
$$
for $x,y\in B_{\varepsilon_2}(0_p)$, we have
$$I(t)=\int_{S^1_p}\int_0^\varepsilon \bar H(t,ru,rD^\varphi(u))\cdot\ell_u(r)\,dr\,du,
$$
where $\ell_u$ is as in~\ref{notrems:density}.
Note that by our choices of $\varepsilon$ and~$\varphi$, the function
$$S^1_p\times[0,\varepsilon)\ni (u,r)\mapsto d_u(r):=\operatorname{dist}(\exp_p(u),\exp_p(rD^\varphi(u)))\in{\mathbb R}
$$
is continuous, and it is smooth on $S^1_p\times(0,\varepsilon)$. By Lemma~\ref{lem:distexpansion-uv},
for every $u\in S^1_p$ the function~$d_u$
has the expansion (\ref{eq:distexpansion-uv}) as $r\searrow0$. Moreover, the corresponding remainder terms for
$d_u$\,, and also for $d'_u$\,, can be estimated in
terms of smooth curvature expressions and are thus bounded uniformly in~$u\in S^1_p$\,.
In particular, there exists $0<\tilde\varepsilon<\varepsilon$ such that $d_u|_{[0,\tilde\varepsilon]}$ has strictly positive
derivative for each $u\in S^1_p$\,. 
Thus
$$\eta:=\min\{d_u(\tilde\varepsilon/2)\mid u\in S^1_p\}>0
$$
is a regular value of $B_{\tilde\varepsilon}(p)\ni q\mapsto\operatorname{dist}(q,\Phi(q))\in{\mathbb R}$,
so
$$\rho(u):=(d_u|_{[0,\tilde\varepsilon]})^{-1}(\eta)\in(0,\tilde\varepsilon/2]
$$
depends smoothly on~$u\in S^1_p$\,.
Let
$$\tilde U:=\{\exp_p(ru)\mid u\in S^1_p, r\in[0,\rho(u))\}.
$$
Then $\tilde U\subset U$ is an open neighborhood of~$p$, so by Remark~\ref{rem:feeling}(ii), $I(t)$ has the same
asymptotic expansion for $t\searrow0$ as
\begin{equation*}
\tilde I(t):=\int_{\tilde U}H(t,q,\Phi(q))
=\int_{S^1_p}\int_0^{\rho(u)}\bar H(t,ru,rD^\varphi(u))\cdot\ell_u(r)\,dr\,du.
\end{equation*}
Writing $d_u^{-1}$ for the inverse of $d_u|_{[0,\eta]}$ and substituting $r$ by $=d_u(r)/\sqrt t$ we obtain
\begin{equation}
\label{eq:Itildesubs}
\tilde I(t)=\int_{S^1_p}\int_0^{\eta/\sqrt t}\bar H\bigl(t,d_u^{-1}(z\sqrt t)u,d_u^{-1}(z\sqrt t)D^\varphi(u)\bigr)
\cdot\sqrt t\cdot\ell_u\bigl(d_u^{-1}(z\sqrt t)\bigr)\cdot (d_u^{-1})'(z\sqrt t)\,dz\,du.
\end{equation}
Note that
$$\operatorname{dist}\bigl(d_u^{-1}(z\sqrt t)u,d_u^{-1}(z\sqrt t)D^\varphi(u)\bigr)=z\sqrt t.
$$
Thus, $\bar H\bigl(t,d_u^{-1}(z\sqrt t)u,d_u^{-1}(z\sqrt t)D^\varphi(u)\bigr)$ for $t\searrow0$ is approximated,
uniformly in $(u,z)\in S^1_p\times[0,\eta]$, by
\begin{equation}
\label{eq:Hbarapprox}
(4\pi t)^{-1} e^{-z^2/4}\biggl(\sum_{i=0}^2{\boldsymbol u}_i(d_u^{-1}(z\sqrt t)u,d_u^{-1}(z\sqrt t)D^\varphi(u))t^i+O(t^3)\biggr).
\end{equation}
By Lemma~\ref{lem:u0bisr4},
\begin{multline*}
\sum_{i=0}^2{\boldsymbol u}_i\bigl(d_u^{-1}(z\sqrt t)u,d_u^{-1}(z\sqrt t)D^\varphi(u)\bigr)t^i=1+\frac1{12}K(p)z^2t\\
\shoveright{+\Bigl(\frac1{24C^2}\nabla^2K_p(u,u)+\frac1{160}K(p)^2-\frac1{120}\nabla^2K_p(u,u)
   \Bigr)z^4t^2+\frac13K(p)t{\text\;}}\\
\shoveright{+\Bigl(\frac1{6C^2} \nabla^2K_p(u,u)+\frac1{30}K(p)^2
  -\frac1{30}\nabla^2K_p(u,u)-\frac1{120}\Delta_gK(p)\Bigr)z^2t^2}\\
+\frac1{15}K(p)^2t^2-\frac1{15}\Delta_gK(p)t^2
 +O(t^3),
\end{multline*}
uniformly in $(u,z)\in S^1_p\times[0,\eta]$.
Moreover, from~(\ref{eq:distexpansion-uv}) one obtains
$$d_u^{-1}(s)=\frac1C s+\frac{\sin^2\varphi}{6C^5}K(p)s^3+Bs^5+O(s^7)
$$
with
$$B:=\Bigl(\frac{7\sin^4\varphi}{72C^9}+\frac{\sin^2\varphi\cdot(2-4\cos\varphi)}{90C^7}\Bigr)K(p)^2
-\frac{\sin^2\varphi\cdot(2+\cos\varphi)}{240C^7}\Delta_g K(p),
$$
and
\begin{equation*}
\begin{split}
(d_u^{-1}(s))^3&=\frac1{C^3}s^3+\frac{\sin^2\varphi}{2C^7}K(p)s^5+O(s^7),\\
(d_u^{-1}(s))^5&=\frac1{C^5}s^5+O(s^7),\\
(d_u^{-1})'(s)&=\frac1C+\frac{\sin^2\varphi}{2C^5}s^2+5Bs^4+O(s^6).
\end{split}
\end{equation*}
Using this and~(\ref{eq:ell}), one sees by a straightforward calculation:
\begin{align*}
\sqrt t\cdot\ell_u(d_u^{-1}(z\sqrt t))\cdot (d_u^{-1})'&(z\sqrt t)=\frac1{C^2}zt+
\Bigl(\frac{2\sin^2\varphi}{3C^6}-\frac1{6C^4}\Bigr)K(p)z^3t^2\\
&+\Bigl(\frac{2\sin^4\varphi}{3C^{10}}-\frac{\sin^2\varphi}{6C^8}+\frac{\sin^2\varphi\cdot(2-4\cos\varphi)}{15C^8}
 +\frac1{120C^6} \Bigr)K(p)^2 z^5 t^3\\
&+\Bigl(-\frac{\sin^2\varphi\cdot(2+\cos\varphi)}{40C^8}\Delta_gK(p)-\frac1{40C^6}\nabla^2K_p(u,u)\Bigr)z^5 t^3+O(t^4).
\end{align*}
By $2-4\cos\varphi=2C^2-2$, $2+\cos\varphi=3-\frac12 C^2$, and $\sin^2\varphi=C^2(1-\frac14 C^2)$, this becomes
\begin{multline*}
\sqrt t\cdot\ell_u(d_u^{-1}(z\sqrt t))\cdot (d_u^{-1})'(z\sqrt t)=
\frac1{C^2}zt+\Bigl(\frac1{2C^4}-\frac1{6C^2}\Bigr)K(p)z^3t^2
+\Bigl(\frac3{8C^6}-\frac1{8C^4}+\frac1{120C^2}\Bigr)K(p)^2z^5t^3\\
+\Bigl[\Bigl(-\frac3{40C^6}+\frac1{32C^4}-\frac1{320C^2}\Bigr)\Delta_gK(p)-\frac1{40C^6}\nabla^2K_p(u,u)\Bigr]z^5 t^3+O(t^4).
\end{multline*}
Multiplying this expression by~(\ref{eq:Hbarapprox}), we obtain that the integrand in~(\ref{eq:Itildesubs}) for $t\searrow0$
is approximated, uniformly in $(u,z)\in S^1_p\times[0,\eta]$, by
\begin{align*}
\frac1{4\pi} e^{-z^2/4}\cdot\biggl\{\frac1{C^2}z&+\Bigl[\Bigl(\frac1{2C^4}-\frac1{12C^2}\Bigr)z^3+\frac1{3C^2}z\Bigr]K(p)t\\
&+\Bigl[\Bigl(\frac3{8C^6}-\frac1{12C^4}+\frac1{1440C^2}\Bigr)z^5+\Bigl(\frac1{6C^4}-\frac1{45C^2}\Bigr)z^3+\frac1{15C^2}z\Bigr]K(p)^2t^2\\
&+\Bigl[\Bigl(-\frac3{40C^6}+\frac1{32C^4}-\frac1{320C^2}\Bigr)z^5-\frac1{120C^2}z^3-\frac1{15C^2}z\Bigr]\Delta_gK(p)t^2\\
&+\Bigl[\Bigl(-\frac1{40C^6}+\frac1{24C^4}-\frac1{120C^2}\Bigr)z^5+\Bigl(\frac1{6C^4}-\frac1{30C^2}\Bigr)z^3\Bigr]\nabla^2K_p(u,u)t^2
+O(t^3)\biggr\}.
\end{align*}
Recall that $\eta>0$, so for any $k\in{\mathbb N}_0$ we have $\int_{\eta/\sqrt t}^\infty e^{-z^2/4}z^k\in O(t^\infty)$ for $t\searrow0$.
Therefore, we can replace $\int_0^{\eta/\sqrt t}$ by $\int_0^\infty$ in~(\ref{eq:Itildesubs}) without changing the coefficients
in its asymptotic expansion for $t\searrow0$. Moreover,
$$\int_0^\infty e^{-z^2/4}z^{2k+1}dz=2^{2k+1}k!,
$$
giving $2$ for $k=0$, $8$ for $k=1$, and $64$ for $k=2$. Finally,
$$\int_{S^1_p}\nabla^2K_p(u,u)\,du=-\frac12\int_{S^1_p}\Delta_gK(p)\,du.
$$
Using all this, we obtain

\begin{align*}
\tilde I(t)={}&\frac{2\pi}{4\pi}\biggl\{\frac1{C^2}\cdot 2+\Bigl[\Bigl(\frac1{2C^4}-\frac1{12C^2}\Bigr)\cdot 8+\frac1{3C^2}\cdot 2\Bigr]K(p)t\\
&\hphantom{\frac{2\pi}{4\pi}\biggl\{\frac1{C^2}\cdot 2}+\Bigl[\Bigl(\frac3{8C^6}-\frac1{12C^4}+\frac1{1440C^2}\Bigr)\cdot 64+\Bigl(\frac1{6C^4}-\frac1{45C^2}\Bigr)\cdot 8+\frac1{15C^2}
\cdot 2\Bigr]K(p)^2t^2\\
&\hphantom{\frac{2\pi}{4\pi}\biggl\{\frac1{C^2}\cdot 2}+\Bigl[\Bigl(-\frac3{40C^6}+\frac1{32C^4}-\frac1{320C^2}\Bigr)\cdot 64-\frac1{120C^2}\cdot 8-\frac1{15C^2}\cdot2\Bigr]\Delta_gK(p)t^2\\
&\hphantom{\frac{2\pi}{4\pi}\biggl\{\frac1{C^2}\cdot 2}+\Bigl[\Bigl(-\frac1{40C^6}+\frac1{24C^4}-\frac1{120C^2}\Bigr)\cdot 64+\Bigl(\frac1{6C^4}-\frac1{30C^2}\Bigr)\cdot 8\Bigr]
\cdot\Bigl(-\frac12\Delta_gK(p)\Bigr)t^2\biggr\}\\
&\hphantom{\frac{2\pi}{4\pi}\biggl\{\frac1{C^2}\cdot 2}+O(t^3)\\
={}&\frac1{C^2}+\frac2{C^4}K(p)t+\Bigl[\Bigl(\frac{12}{C^6}-\frac2{C^4}\Bigr)K(p)^2-\frac2{C^6}\Delta_gK(p)\Bigr]t^2+O(t^3)
\end{align*}
for $t\searrow0$,
yielding the claimed result for the coefficient $b_2(\Phi)$ at~$t^2$ and, as an aside, the previously known formulas for
$b_0(\Phi)$ and $b_1(\Phi)$ (see Remark~\ref{rem:b0b1}).
\end{proof}

\Section{Contribution of orbisurface cone points to the second order heat coefficient}
\label{sec:orbi}

\noindent
We now consider the heat kernel of compact Riemannian orbifolds; see, e.g., \cite{DGGW} for the general framework in this context.
Let $(\mathcal O,g)$ be a closed two-dimensional Riemannian orbifold,
let $H_{\mathcal O}:(0,\infty)\times\mathcal O\times\mathcal O\to{\mathbb R}$ denote the heat kernel
associated with the Laplace operator~$\Delta_g$ on $C^\infty(\mathcal O)$, and let
$$Z(t):=\int_{\mathcal O} H_{\mathcal O}(t,x,x)\,dx
$$
be the corresponding heat trace.
It is well-known that there is an asymptotic expansion
$$Z(t)\sim(4\pi t)^{-1}\sum_{i=0}^\infty a_{i/2} t^{i/2}
$$
for $t\searrow0$; half powers may occur if $\mathcal O$ contains mirror lines. More precisely, the principal (open) stratum
contributes $(4\pi t)^{-1}\sum_{\ell=0}^\infty a^{(\mathcal O)}_\ell t^\ell$ to this expansion (where $a^{(\mathcal O)}_k$ are the integrals over~$\mathcal O$
of certain curvature invariants -- the same as in the case of manifolds), and any singular stratum $N\subset\mathcal O$
adds a contribution of the form
$$(4\pi t)^{-\operatorname{dim\,}(N)/2}\sum_{\ell=0}^\infty a^{(N)}_\ell t^\ell\,;
$$
see \cite{DGGW}, Theorem~4.8. 
In the case $N=\{\bar p\}$, where $\bar p\in\mathcal O$ is a cone point
of order~$k\in{\mathbb N}$, arising from a rotation~$\Phi$ with angle $\varphi:=2\pi/k$, 
one has $\operatorname{dim\,}(N)=0$ and
\begin{equation}
\label{eq:cone-sum}
a^{(\{\bar p\})}_\ell=\frac1k\sum_{j=1}^{k-1} b_\ell(\Phi^j),
\end{equation}
where the $b_\ell$ are as in~\ref{notrems:rot} (see \cite{DGGW}, 4.5--4.8 \& Example~5.3).
More precisely, the role of the manifold~$M$ of~\ref{notrems:rot} is played here by
the domain~$\tilde U$ of a local orbifold chart around~$\bar p$,
endowed with the pull-back of the Riemannian metric~$g$ (again denoted~$g$),
such that $(\tilde U,g)/\{\operatorname{Id},\Phi,\ldots,\Phi^{k-1}\}$
is isometric to a neighborhood of~$p$ in~$\mathcal O$; the point $p$ of~\ref{notrems:rot} is the preimage of~$\bar p$.

\begin{theorem}
\label{thm:cone}
Let $\bar p\in(\mathcal O,g)$ be a cone point of order~$k\in{\mathbb N}$ as above. Then
\begin{multline*}
a^{(\{\bar p\})}_2
=\Bigl[\frac1{2520}\Bigl(k^5-\frac1k\Bigr)+\frac1{720}\Bigl(k^3-\frac1k\Bigr)
+\frac1{180}\Bigl(k-\frac1k\Bigr)\Bigr]K(\bar p)^2\\
-\Bigl[\frac1{15120}\Bigl(k^5-\frac1k\Bigr)+\frac1{1440}\Bigl(k^3-\frac1k\Bigr)
+\frac1{180}\Bigl(k-\frac1k\Bigr)\Bigr]\Delta_gK(\bar p).
\end{multline*}
\end{theorem}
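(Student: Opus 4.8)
The plan is to substitute the formula for $b_2$ from Theorem~\ref{thm:b2} into the summation formula~(\ref{eq:cone-sum}). The cone point of order~$k$ arises from a rotation~$\Phi$ of angle $2\pi/k$, so each iterate $\Phi^j$ ($1\le j\le k-1$) is an isometry fixing~$p$ whose differential at~$p$ is the rotation by $2\pi j/k$. Since the right-hand side of Theorem~\ref{thm:b2} depends on the rotation angle~$\varphi$ only through $C=\sqrt{2-2\cos\varphi}$, and since $C$ is invariant under $\varphi\mapsto 2\pi-\varphi$, the theorem determines $b_2(\Phi^j)$ for every~$j$: for $2\pi j/k\le\pi$ one applies it directly, and for $2\pi j/k>\pi$ one applies it to the inverse rotation $\Phi^{k-j}=(\Phi^j)^{-1}$, using $b_2(\Phi^j)=b_2((\Phi^j)^{-1})$, which holds by the symmetry of the heat kernel together with the substitution $q\mapsto\Phi^j(q)$. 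Writing $C_j:=\sqrt{2-2\cos(2\pi j/k)}$, this gives
\[
b_2(\Phi^j)=\Bigl(\frac{12}{C_j^6}-\frac2{C_j^4}\Bigr)K(\bar p)^2-\frac2{C_j^6}\Delta_gK(\bar p),
\]
and hence, by~(\ref{eq:cone-sum}),
\[
a_2^{(\{\bar p\})}=\Bigl(\frac{12}{k}\sum_{j=1}^{k-1}\frac1{C_j^6}-\frac2{k}\sum_{j=1}^{k-1}\frac1{C_j^4}\Bigr)K(\bar p)^2
-\frac2{k}\sum_{j=1}^{k-1}\frac1{C_j^6}\,\Delta_gK(\bar p).
\]

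Next I would reduce the two occurring sums to classical trigonometric sums. From $C_j^2=2-2\cos(2\pi j/k)=4\sin^2(\pi j/k)$ one gets $C_j^{-4}=\tfrac1{16}\csc^4(\pi j/k)$ and $C_j^{-6}=\tfrac1{64}\csc^6(\pi j/k)$, so everything reduces to the cosecant power sums $S_4:=\sum_{j=1}^{k-1}\csc^4(\pi j/k)$ and $S_6:=\sum_{j=1}^{k-1}\csc^6(\pi j/k)$, after which
\[
a_2^{(\{\bar p\})}=\Bigl(\frac{3S_6}{16k}-\frac{S_4}{8k}\Bigr)K(\bar p)^2-\frac{S_6}{32k}\,\Delta_gK(\bar p).
\]

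The main obstacle is the closed-form evaluation of $S_4$ and $S_6$. I would use the classical identities
\[
S_2=\frac{k^2-1}3,\qquad S_4=\frac{(k^2-1)(k^2+11)}{45},\qquad S_6=\frac{(k^2-1)(2k^4+23k^2+191)}{945}.
\]
These can be obtained by observing that the numbers $\cot^2(\pi j/k)$, $j=1,\dots,k-1$, are the roots (with multiplicity) of an explicit Chebyshev-type polynomial, computing their elementary symmetric functions, passing to the power sums $\sum_j\cot^{2m}(\pi j/k)$ via Newton's identities, and then using $\csc^2=1+\cot^2$ to convert to the $S_{2m}$; alternatively one simply cites them from the literature on cotangent/cosecant sums. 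I expect this step to carry essentially all the genuine difficulty, the remaining steps being bookkeeping.

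Finally I would assemble everything. Substituting the closed forms for $S_4$ and $S_6$ and clearing denominators gives the coefficient of $K(\bar p)^2$ as $\tfrac1{5040k}(2k^6+7k^4+28k^2-37)=\tfrac1{5040k}(k^2-1)(2k^4+9k^2+37)$ and the coefficient of $\Delta_gK(\bar p)$ as $-\tfrac1{30240k}(2k^6+21k^4+168k^2-191)=-\tfrac1{30240k}(k^2-1)(2k^4+23k^2+191)$. Rewriting $k^{2m}-1=k\,(k^{2m-1}-\tfrac1k)$ in each numerator and regrouping over the common denominators by the terms $(k^5-\tfrac1k)$, $(k^3-\tfrac1k)$, $(k-\tfrac1k)$ then reproduces the two bracketed expressions in the statement; this last regrouping is purely elementary algebra.
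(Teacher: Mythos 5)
Your proposal is correct and follows essentially the same route as the paper's proof: substitute Theorem~\ref{thm:b2} into (\ref{eq:cone-sum}), reduce via $C_j^2=4\sin^2(\pi j/k)$ to the cosecant power sums $S_4$ and $S_6$, and insert their classical closed forms (which the paper cites from Chu--Marini and U\c car, and which agree with yours after factoring out $k^2-1$), the final regrouping matching the stated coefficients. The only additional point you make explicit is the justification for applying Theorem~\ref{thm:b2} when $2\pi j/k>\pi$, via $b_2(\Phi^j)=b_2((\Phi^j)^{-1})$, which the paper passes over silently.
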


\begin{proof}
Let $p$ denote the preimage of~$\bar p$ in an orbifold chart $(\tilde U,g)$ as above.
Note that with $\varphi:=2\pi/k$ and
$C:=\sqrt{2-2\cos\varphi}$ one has
$$C^2=4\sin^2\frac\varphi2,
$$
and by \cite{CM}, p.~148 or, e.g., \cite{Uc}, 3.55,
\begin{align*}
\sum_{j=1}^{k-1}\frac1{\sin^4(j\cdot\frac\pi k)}&=\frac1{45}(k^4-1)+\frac29(k^2-1),\\
\sum_{j=1}^{k-1}\frac1{\sin^6(j\cdot\frac\pi k)}&=\frac2{945}(k^6-1)+\frac1{45}(k^4-1)+\frac8{45}(k^2-1).
\end{align*}
Combining this with (\ref{eq:cone-sum}) and Theorem~\ref{thm:b2}, we obtain
\begin{align*}
a^{(\{\bar p\})}_2
={}&\frac1k\sum_{j=1}^{k-1}\biggl[\biggl(\frac{12}{4^3\sin^6(j\cdot\frac\pi k)}-\frac2{4^2\sin^4(j\cdot\frac\pi k)}\biggr)K(p)^2
-\frac2{4^3\sin^6(j\cdot\frac\pi k)}\Delta_gK(p)\biggr]\\
={}&\frac1k\biggl\{\biggl[\frac{12\cdot2}{64\cdot 945}(k^6-1)+\Bigl(\frac{12\cdot1}{64\cdot45}-\frac{2\cdot1}{16\cdot 45}\Bigr)(k^4-1)
+\Bigl(\frac{12\cdot8}{64\cdot 45}-\frac{2\cdot2}{16\cdot 9}\Bigr)(k^2-1)\biggr]K(p)^2\\
&-\biggl[\frac{2\cdot2}{64\cdot 945}(k^6-1)
  +\frac{2\cdot1}{64\cdot 45}(k^4-1)+\frac{2\cdot8}{64\cdot 45}(k^2-1)\biggr]\Delta_gK(p)\biggr\}\\
={}&\biggl[\frac1{2520}\Bigl(k^5-\frac1k\Bigr)+\frac1{720}\Bigl(k^3-\frac1k\Bigr)+\frac1{180}\Bigl(k-\frac1k\Bigr)\biggr]K(p)^2\\
&-\biggl[\frac1{15120}\Bigl(k^5-\frac1k\Bigr)+\frac1{1440}\Bigl(k^3-\frac1k\Bigr)+\frac1{180}\Bigl(k-\frac1k\Bigr)\biggr]\Delta_gK(p).
\end{align*}
Finally, note that by definition of the curvature and the Laplacian
on Riemannian orbifolds, $K(\bar p)=K(p)$ and $\Delta_gK(\bar p)=\Delta_gK(p)$. The theorem now follows.
\end{proof}

\begin{remark}
\label{rem:cone-a0a1}
Analogously, one could derive that
\begin{align*}
a_0^{(\{\bar p\})}&=\frac1{12}\Bigl(k-\frac1k\Bigr),\\
a_1^{(\{\bar p\})}&=\biggl[\frac1{360}\Bigl(k^3-\frac1k\Bigr)+\frac1{36}\Bigl(k-\frac1k\Bigr)\biggr]K(\bar p),
\end{align*}
for an orbisurface cone point~$\bar p\in(\mathcal O,g)$ of order~$k$,
using
$$\sum_{j=1}^{k-1}\frac1{\sin^2(j\cdot\pi/k)}=\frac13(k^2-1)\text{ \ and \ }b_0(\Phi)=\frac1C\,,\;\;b_1(\Phi)=\frac2{C^2}K(p).
$$
Note that the above formulas for $a_0^{(\{\bar p\})}$ and $a_1^{(\{\bar p\})}$ were already computed in \cite{DGGW}, 5.6. 
\end{remark}

\Section{Corner contributions to the heat coefficients of geodesic polygons, up to degree two}
\label{sec:applic}

\noindent
In this section we follow ideas from \cite{Uc}, Section 4.3, concerning the case
of interior angles of the form $\gamma=\pi/k$ in geodesic polygons. However, we drop the assumption
of constant Gauss curvature which was present there and replace it by certain milder symmetricity assumptions
(see~(\ref{ass:symmetry}) below).

\begin{notation}
\label{not:polyecke}
We consider a two-dimensional Riemannian manifold $(M,g)$ again. Let $P$ 
be a compact geodesic
polygon in $(M,g)$, and let $p\in M$ be one of its corners. Let $\gamma$ be the
interior angle of~$P$ at~$p$. (For simplicity we assume that there is only one interior 
angle of~$P$ at the corner~$p$, although more general settings as considered in~\cite{Uc} 
could be treated analogously.)
As in~\ref{notrems:rot}, choose $\varepsilon_1>0$ such that $\exp_p|_{B_{\varepsilon_1}(0_p)}$ is a diffeomorphism 
onto its image
$$B:=B_{\varepsilon_1}(p).
$$
We now also assume that $\varepsilon_1$ is so small that $B\cap P$ is
the image, under $\exp_p|_{B_{\varepsilon_1}(0_p)}$\,,
of a circular sector of radius~$\varepsilon_1$ in $T_pM$.
Let $E_0\,,E_1$ be the two geodesic segments in $B\cap \partial P$ which meet at~$p$,
and let $u_0\,,u_1\in S^1_p$ be unit vectors pointing into the direction of $E_0$ and $E_1$\,, respectively.
Choose the orientation on~$B$ such that the rotation $D^\gamma:T_pM\to T_pM$ maps
$u_0$ to~$u_1$\,. Let $S:T_pM\to T_pM$ denote the reflection across ${\mathbb R} u_0$\,.
We consider the diffeomorphisms
\begin{align*}
\sigma&:=\exp_p\circ S\circ\bigl(\exp_p|_{B_{\varepsilon_1}(0_p)}\bigr)^{-1}:B\to B,\\
\delta^\gamma&:=\exp_p\circ D^\gamma\circ\bigl(\exp_p|_{B_{\varepsilon_1}(0_p)}\bigr)^{-1}:B\to B.
\end{align*}
Denote by $G$ the group of diffeomorphisms of $B$ generated by $\delta^\gamma$ and~$\sigma$.
We now assume that $\gamma$ is of the form
$$\gamma=\pi/k \text{ for some $k\ge2$ in ${\mathbb N}$, so $G$ is a dihedral group of order }4k.
$$
Moreover, we assume that, after possibly making $\varepsilon_1$ smaller,
\begin{equation}
\label{ass:symmetry}
G=\langle \{\delta^\gamma,\sigma\}\rangle \subset\operatorname{Isom}(B,g).
\end{equation}
Note that $G$ consists of the $2k$ rotations $\delta^{i\gamma}:=(\delta^\gamma)^i$  with $i\in\{0,\dots,2k-1\}$
and the $2k$ reflections $\delta^{i\gamma}\circ\sigma$.
(A special case in which the above symmetry assumptions hold is the case of $B$ being
a rotational surface with vertex~$p$.)
We choose $\varepsilon>0$ such that $\varepsilon_2:=2\varepsilon<\varepsilon_1$ and write
\begin{equation*}
\begin{gathered}
V:=B_{2\varepsilon}(p)\subset B,\quad U:=B_\varepsilon(p)\subset V,\\
W_{2\varepsilon}:=V\cap P,\quad W_\varepsilon:=U\cap P.
\end{gathered}
\end{equation*}
Finally, we denote by $H_P$, $H_V$, $H_{W_{2\varepsilon}}$ the Dirichlet heat kernels of~$P$, $V$, and $W_{2\varepsilon}$\,, respectively.
\end{notation}

\begin{remark}
\label{rem:relation}
Let the situation be as above in~\ref{not:polyecke}, and let
\begin{equation*}
Z_{W_\varepsilon}(t):=\int_{W_\varepsilon}H_P(t,q,q)\,dq,
\end{equation*}
where $dq$ abbreviates $\operatorname{\,\textit{dvol}}_g(q)$.
Note that the contribution of the interior angle at the corner~$p$ to the asymptotic
expansion of the heat trace $t\mapsto\int_P H_P(t,q,q)\,dq$ of~$P$ is the same as its contribution to the asymptotic
expansion of the function $Z_{W_\varepsilon}$ as just defined.
We will now show, using the symmetry assumption~(\ref{ass:symmetry}), that
the contribution of the interior angle~$\gamma=\pi/k$ at~$p$ to the asymptotic expansion of $Z_{W_\varepsilon}(t)$
equals $\frac12$~times the contribution of a cone point~$\bar p$ of order~$k$
to the heat kernel coefficients of a Riemannian orbisurface, where $\bar p$ has a neighborhood isometric to $B$
divided by a group of rotations about~$p$.
One could show this by using arguments analogous to those in~\cite{Uc}, p.~142--144.
We choose a related, but slightly different argument using
a little trick (see~(\ref{eq:trick}) below) involving rotations, as in the computation on p.~108 in~\cite{Uc}.

First of all, by the Principle of not feeling the boundary (recall Remark~\ref{rem:feeling}(i)), we can replace $H_P(t,q,q)$
by $H_{W_{2\varepsilon}}(t,q,q)$ in the definition of $Z_{W_\varepsilon}(t)$ without changing its asymptotic
expansion as $t\searrow0$.
Next, we describe $H_{W_{2\varepsilon}}(t,q,q)$ using Sommerfeld's method of images
(see also \cite{Uc}, Section~3.4):
For $i\in\{0,\ldots, 2k-1\}$ let
$$\sigma_i:=\delta^{i\gamma}\circ\sigma\circ\delta^{-i\gamma}\in\operatorname{Isom}(B,g)
$$
denote the reflection across the geodesic with initial vector $(D^\gamma)^i(u_0)=D^{i\gamma}(u_0)$.
Of course, $\sigma_i=\sigma_{i+k}$ for $i\in\{0,\ldots,k-1\}$.
Write
$$\Psi_i:=\sigma_i\circ\dots\circ\sigma_1\text{ for }i\in\{1,\ldots,2k-1\},\text{ \ and \ }\Psi_0:=\operatorname{Id}_V.
$$
Then
\begin{equation*}
H_{W_{2\varepsilon}}(t,q,q)=\sum_{i=0}^{2k-1}(-1)^iH_V(t,q,\Psi_i(q))
\end{equation*}
for all $t>0$ and $q\in W$. So the small-time asymptotic expansion of $Z_{W_\varepsilon}(t)$ is the same as that of
\begin{equation}
\label{eq:sommerfeld}
\sum_{i=0}^{2k-1}(-1)^i\int_{W_\varepsilon}H_V(t,q,\Psi_i(q))\,dq.
\end{equation}
We now show that sum of those summands which correspond to odd indices~$i$ does actually not enter into the corner
contribution: 
Note that $\Psi_{2j-1}=\sigma_j$ for $j\in\{1,\dots,k\}$ and thus,
using $\sigma_j=\delta^{j\gamma}\circ\sigma_0\circ \delta^{-j\gamma}$:
\begin{equation}
\label{eq:trick}
\begin{split}
\sum_{j=1}^k\int_{W_\varepsilon}&(-1)^{2j-1}H_V(t,q,\Psi_{2j-1}(q))\,dq
=-\sum_{j=1}^k\int_{W_\varepsilon}H_V(t,\delta^{-j\gamma}q,\delta^{-j\gamma}\sigma_j(q))\,dq\\
&=-\sum_{j=1}^k\int_{W_\varepsilon}H_V(t,\delta^{-j\gamma}q,\sigma_0(\delta^{-j\gamma}(q))\,dq
=-\sum_{j=1}^k\int_{\delta^{-j\gamma}(W_\varepsilon)}H_V(t,q,\sigma_0(q))\,dq\\
&=-\int_{\bigcup_{j=1,\ldots,k}\delta^{-j\gamma}(W_\varepsilon)}H_V(t,q,\sigma_0(q))\,dq
=-\int_{U'}H_V(t,q,\sigma_0(q))\,dq,
\end{split}
\end{equation}
where $U':=\bigcup_{j=1,\ldots,k}\delta^{-j\gamma}(W_\varepsilon)$ is a half-disc; $U'$ is that part of $U=B_\varepsilon(p)$
that lies on the same side of $L_\varepsilon:=\exp_p(\{ru_0\mid r\in(-\varepsilon,\varepsilon)\})$ as $\sigma_0(W_\varepsilon)=\delta^{-\gamma}(W_\varepsilon)$.
In particular, $U'$ has no corner at~$p$, and the small-time asymptotic expansion of~(\ref{eq:trick}) will
yield only the contribution of the straight boundary segment $L_\varepsilon$ to the Dirichlet heat trace expansion of the analogous
half-disc $V'\subset V$.

Write $\varphi:=2\gamma=2\pi/k$ and $\Phi:=\delta^\varphi$. Then,
on the other hand, the sum of those summands in~(\ref{eq:sommerfeld}) which correspond to even indices~$i$ gives, 
using $\Psi_{2j}=\delta^{2j\gamma}$ and the symmetry condition~(\ref{ass:symmetry}):
\begin{multline*}
\sum_{j=0}^{k-1}\int_{W_\varepsilon}(-1)^{2j}H_V(t,q,\Psi_{2j}(q))\,dq
=\frac12\cdot2\sum_{j=0}^{k-1}\int_{W_\varepsilon}H_V(t,q,\delta^{2j\gamma}(q))\,dq\\
=\frac12\sum_{j=0}^{k-1}\int_{W_\varepsilon\cup\,\delta^\gamma(W_\varepsilon)}H_V(t,q,\delta^{2j\gamma}(q))\,dq
=\frac1{2k}\sum_{j=0}^{k-1}\int_U H_V(t,q,\Phi^j(q))\,dq.
\end{multline*}
By (\ref{eq:I-asymptotic}), the asymptotic expansion for $t\searrow0$ of this sum is
$$\frac1{2k}\sum_{j=0}^{k-1}\sum_{\ell=0}^\infty b_\ell(\Phi^j)t^\ell
=\sum_{\ell=0}^\infty\alpha_\ell t^\ell\text{ \ with \ }
\alpha_\ell:=\frac1{2k}\sum_{j=0}^{k-1}b_\ell(\Phi_j).
$$
By (\ref{eq:cone-sum}), we have $\alpha_\ell=\frac12a_\ell^{(\{\bar p\})}$, where $\bar p$ is a
cone point of order~$k$ in any closed orbisurface $\mathcal O$ with the property
that some neighborhood of~$\bar p$ is isometric to $B/\{\Phi^j\mid j=0,\dots,k-1\}$.
We know the values of $\frac12a_0^{(\{\bar p\})}$, $\frac12a_1^{(\{\bar p\})}$, $\frac12a_2^{(\{\bar p\})}$
from Remark~\ref{rem:cone-a0a1} and Theorem~\ref{thm:cone}.
Finally, note that
$$k^{m-1}-\frac1k=\frac{\pi^m-\gamma^m}{\gamma^{m-1}\pi}
$$
since $\gamma=\pi/k$. So we have shown:
\end{remark}

\begin{maintheorem}
\label{mainthm:corner}
In the situation of Notation~\ref{not:polyecke}, with the symmetry assumption~(\ref{ass:symmetry}),
the contribution
of the corner $p$ with interior angle $\gamma=\pi/k$ (where $k\in{\mathbb N}$, $k\ge2$) to the asymptotic expansion
of the heat trace associated with the Dirichlet Laplacian of the geodesic polygon~$P$ has the form
$\sum_{\ell=0}^\infty c_\ell(\gamma)t^\ell$ with the coefficients $c_\ell(\gamma)$ given by
$$c_\ell(\gamma)=\frac12 a_\ell^{(\{\bar p\})},
$$
where $\bar p$ is an orbisurface cone point of order~$k$ having a neighborhood
isometric to $B/\{\delta^{2j\gamma}\mid j=0,\dots,k-1\}$. In particular,
by Theorem~\ref{thm:cone}, 
\begin{align}
\label{eq:c0}
c_0(\gamma)={}&\frac{\pi^2-\gamma^2}{24\gamma\pi},\\
\label{eq:c1}
c_1(\gamma)={}&\Bigl(\frac{\pi^4-\gamma^4}{720\gamma^2\pi}
+\frac{\pi^2-\gamma^2}{72\gamma\pi}\Bigr)K(p),\\
\label{eq:c2}
c_2(\gamma)={}&\Bigl(\frac{\pi^6-\gamma^6}{5040\gamma^5\pi}+\frac{\pi^4-\gamma^4}{1440\gamma^3\pi}
+\frac{\pi^2-\gamma^2}{360\gamma\pi}\Bigr)K(p)^2\\
&\notag-\Bigl(\frac{\pi^6-\gamma^6}{30240\gamma^5\pi}+\frac{\pi^4-\gamma^4}{2880\gamma^3\pi}
+\frac{\pi^2-\gamma^2}{360\gamma\pi}\Bigr)\Delta_gK(p).
\end{align}
(As always in this article, $\Delta_g$ here denotes ${}-\operatorname{div}_g\circ\operatorname{grad}_g$.)
\end{maintheorem}

\begin{remark}\
(i)
Formula~(\ref{eq:c0}) for $c_0(\gamma)$ seems well-known, even for general $\gamma$ (not only those of the
form $\gamma=\pi/k$) and without any symmetry assumptions; see, e.g., the discussion in~\cite{MR}.
Of course, in the case of euclidean polygons this is obvious from
the classical formula~(\ref{eq:Kac-MS}) found by D.~Ray and proved by
van den Berg and Srisatkunarajah~\cite{vdBS}.

(ii)
In the case of constant curvature~$K=1$, the above formulas (\ref{eq:c0}), (\ref{eq:c1}), (\ref{eq:c2})
-- even for general $\gamma\in(0,2\pi]$ --
were proved by Watson~\cite{Wa}. In the case of arbitrary constant curvature $K\in{\mathbb R}$
the same was proved by U\c car in~\cite{Uc}, the main breakthrough there being the computation
of the Green kernel for an arbitrary geodesic wedge in the hyperbolic plane.
Those authors actually computed $c_\ell(\gamma)$
for {\it every\/} $\ell\in{\mathbb N}_0$ in the case $K=1$, resp.~$K\in{\mathbb R}$ constant.
It turns out that for constant curvature~$K$, one has $c_\ell(\gamma)=f_\ell(\gamma)\cdot K^\ell$ for certain
rational functions~$f_\ell$\,.
Of course, based on U\c car's and Watson's formulas,
the above formula for $c_1(\gamma)$, as well as  the coefficient at $K(p)^2$ in~$c_2(\gamma)$,
was to be expected. However, the constant curvature case did not provide insight into the way
in which $\Delta_gK(p)$ -- which is, up to linear combinations, the only other curvature invariant of order four
in dimension two besides $K(p)^2$ -- might enter into $c_2(\gamma)$.

(iii)
To the author's best knowledge, formula~(\ref{eq:c2}) for $c_2(\gamma)$ (with $\gamma\in\{\pi/k\mid k\in{\mathbb N}\}$
and under the symmetry assumptions~(\ref{ass:symmetry})), especially its coefficient at $\Delta_gK(p)$,
was not known previously. In particular, the main theoretic insight that
this formula provides is that here the
coefficient at $\Delta_gK(p)$ is a rational function of~$\gamma$,  and that it is of a similar structure
as the coefficient at $K(p)^2$. We expect that the formula extends to general $\gamma\in(0,2\pi]$; see Conjecture~\ref{conj}.
below.

(iv)
Note that the symmetry condition~(\ref{ass:symmetry}), which has been necessary for our approach,
implies that the gradient of $K$ at~$p$ vanishes. Therefore,
the methods of the present article cannot lead in any way, in situations where that symmetry condition is absent,
to any knowledge about the possible coefficient of $\|\nabla K(p)\|^2$ in~$c_3(\gamma)$ (note that $\|\nabla K(p)\|^2$
is one of the curvature invariants of order six).
Concerning $c_2(\gamma)$, however, we expect that formula~(\ref{eq:c2}) from the above theorem holds more generally, at
least if $\nabla^2K_p$ still is a multiple of~$g_p$\,.
So we conclude this paper with the following conjecture:
\end{remark}

\begin{conjecture}
\label{conj}
Let $\gamma\in(0,2\pi]$, and let $P$ be a compact geodesic polygon in a two-dimensional Riemannian manifold $(M,g)$.
Let $p$ be a corner of~$P$ with interior angle~$\gamma\in(0,2\pi]$, and assume that $\nabla^2 K_p$ is a multiple of $g_p$\,.
Then the coefficient at~$t^2$
in the small-time asymptotic expansion of the Dirichlet heat kernel of~$P$ is given by formula~(\ref{eq:c2}).
\end{conjecture}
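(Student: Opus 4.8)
The plan is to decouple the two weight-four invariants appearing in $c_2(\gamma)$ and to determine their coefficients separately. First I would establish a universality statement: building a local parametrix for the Dirichlet heat kernel near a geodesic corner as in van den Berg--Srisatkunarajah~\cite{vdBS} and Watson~\cite{Wa}, but retaining the variable-curvature terms, one shows that under the hypothesis $\nabla^2 K_p=c\,g_p$ the corner contribution at~$t^2$ has the form $c_2(\gamma)=A(\gamma)K(p)^2+B(\gamma)\Delta_gK(p)$. The point is that the only scalar curvature invariants of weight four in dimension two are $K^2$ and $\Delta_gK$; the isotropy assumption on $\nabla^2K_p$ removes any dependence on the position of the wedge relative to the Hessian (the edge-direction values $\nabla^2K_p(u_0,u_0)$ etc.\ all reduce to $-\tfrac12\Delta_gK(p)$), and the weight-three quantity $dK_p$ cannot enter a coefficient of even weight. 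The function $A(\gamma)$ is then already known: specializing to constant curvature and invoking Watson's~\cite{Wa} and U\c car's~\cite{Uc} explicit formulas gives $A(\gamma)=\frac{\pi^6-\gamma^6}{5040\gamma^5\pi}+\frac{\pi^4-\gamma^4}{1440\gamma^3\pi}+\frac{\pi^2-\gamma^2}{360\gamma\pi}$.

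The remaining task is to pin down $B(\gamma)$, and the route I would try first is to imitate the proof of Theorem~\ref{thm:b2} with the cyclic summation of Section~\ref{sec:applic} replaced by Sommerfeld's contour-integral representation of the Dirichlet heat kernel on a geodesic wedge of opening~$\gamma$. This representation specializes to the method of images precisely when $\pi/\gamma\in\mathbb N$, so it is the correct analytic continuation to arbitrary angles of the arguments used for $\gamma=\pi/k$. Crucially, the local input required for the computation --- the sixth-order distance expansion (Corollary~\ref{cor:distexpansion}) and the expansions of ${\boldsymbol u}_0,{\boldsymbol u}_1,{\boldsymbol u}_2$ (Lemma~\ref{lem:u0bisr4}) --- is already available for an arbitrary rotation angle and nowhere uses $\gamma=\pi/k$. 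One would substitute these expansions into the wedge kernel, carry out the analogue of the $z$-integration and, in place of $\frac1k\sum_j$, the Sommerfeld contour integration, and read off the coefficient of $\Delta_gK(p)\,t^2$.

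A lighter alternative is interpolation in~$\gamma$. Granting the universality statement, $B$ is a single function on $(0,2\pi]$ whose values $B(\pi/k)$ for every integer $k\ge2$ are already supplied by the Main Theorem~\ref{mainthm:corner} and match the candidate $-\bigl(\frac{\pi^6-\gamma^6}{30240\gamma^5\pi}+\frac{\pi^4-\gamma^4}{2880\gamma^3\pi}+\frac{\pi^2-\gamma^2}{360\gamma\pi}\bigr)$ along the sequence $\pi/k\searrow0$. If one can show a priori that $B$ is real-analytic in~$\gamma$, or more strongly that it lies in the span of the three functions $\gamma\mapsto(\pi^{2m}-\gamma^{2m})/(\gamma^{2m-1}\pi)$ with $m\le3$, then agreement on the accumulating set $\{\pi/k\}$ forces the candidate formula for all~$\gamma$.

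The main obstacle is exactly the ingredient that Donnelly's~\cite{Do} structure theory supplies only in the orbifold case $\gamma=\pi/k$, where the corner is reduced to a genuine isometry fixed point: a rigorous, uniform control of the corner parametrix in variable curvature. In the Sommerfeld approach this means proving that the curvature corrections to the wedge kernel feed into the $t^2$ coefficient only through the local expansions used in Theorem~\ref{thm:b2}, with all remainder and boundary-layer terms relegated to higher order in~$t$; in the interpolation approach it means establishing the analytic (or rational) dependence of $c_2(\gamma)$ on~$\gamma$ from first principles, rather than importing it from the cyclic-group decomposition that is unavailable for general angles.
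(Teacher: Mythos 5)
The statement you are addressing is Conjecture~\ref{conj}; the paper deliberately leaves it open and offers no proof, so there is nothing in the paper to compare your argument against. Your text is, by its own admission, a research plan rather than a proof: both of its essential steps are left unestablished. The ``universality statement'' --- that for a general wedge angle $\gamma$ the $t^2$-contribution is a universal linear combination $A(\gamma)K(p)^2+B(\gamma)\Delta_gK(p)$ --- is exactly what Donnelly's structure theory provides only when the corner can be folded into an isometry fixed point, i.e.\ when $\gamma=\pi/k$; for general $\gamma$ no such structural result is available, and asserting it ``by building a parametrix retaining the variable-curvature terms'' is naming the difficulty, not resolving it. Likewise, the Sommerfeld-contour route requires the analogue of U\c car's Green/heat kernel for a geodesic wedge in \emph{variable} curvature, which is precisely the missing analytic input; the local expansions of Corollary~\ref{cor:distexpansion} and Lemma~\ref{lem:u0bisr4} are necessary but nowhere near sufficient, since one must also control the boundary-reflection terms that have no analogue in the closed-orbifold computation of Theorem~\ref{thm:b2}.

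Your interpolation fallback contains a concrete logical error. The set $\{\pi/k:k\ge2\}$ accumulates only at $\gamma=0$, which does not lie in $(0,2\pi]$, so real-analyticity of $B$ on $(0,2\pi]$ together with agreement on that set does \emph{not} force agreement everywhere: the function $\gamma\mapsto\sin(\pi^2/\gamma)$ is real-analytic on $(0,2\pi]$ and vanishes at every $\gamma=\pi/k$ without vanishing identically. The identity theorem would require analyticity in a neighborhood of $0$, which the candidate formula itself excludes (it has a pole of order five at $\gamma=0$). Only the much stronger a priori claim that $B$ lies in the three-dimensional span of $\gamma\mapsto(\pi^{2m}-\gamma^{2m})/(\gamma^{2m-1}\pi)$, $m\le3$, would suffice, and you give no argument for that beyond analogy with the constant-curvature case. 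In short, the proposal correctly identifies the obstacles but does not overcome any of them; the conjecture remains open.
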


\appendix

\Section{Proof of Lemma~\ref{lem:distexpansion}}

\noindent
We partly follow Nicolaescu's approach from \cite{Ni}, Appendix~A. He considered Riemannian
manifolds of arbitrary dimension~$n$ and there derived the expansion of $\operatorname{dist}(\exp_p(x),\exp_p(y))$
up to order four.
In dimension two, his formula corresponds to the first two terms of formula (\ref{eq:distexpansion}),
with $K(p)\|x\wedge y\|^2$ replaced by
$\langle R(x,y)y,x\rangle $. The idea in~\cite{Ni} is to use the fact that for any $q\in M$, the function
$f:=\operatorname{dist}^2(q,\,.\,):M\to{\mathbb R}$ satisfies, wherever it is smooth (in particular, near~$q$),
a so-called Hamilton-Jacobi equation: 
\begin{equation}
\label{eq:hamjac}
\|df\|^2=4f.
\end{equation}
Here we use $\|\,.\,\|$ to denote the pointwise norm
canonically induced by~$g$ on tensor fields, and we will do similarly for $\langle\,\,,\,\rangle$.

Choose a small neighborhood~$W$ of~$0\in T_pM$ contained in the domain of injectivity of~$\exp_p$
and such that $U:=\exp_p(W)\subset M$ is convex (meaning that for all $q,w\in U$, there exists a unique
geodesic in~$M$ with length~$\operatorname{dist}(q,w)$, and that geodesic is contained in~$U$).
Consider
$$F:W\times W\ni(x,y)\mapsto\operatorname{dist}^2(\exp_p(x),\exp_p(y))\in{\mathbb R}.
$$
We write the Taylor expansion of~$F$ at~$(0,0)$ in the form
\begin{equation}
\label{eq:taylor}
(T^\infty_{(0,0)}F)(x,y)=(F_0+F_1+F_2+F_3+\ldots)(x,y)
\text{ \ with } F_m=F_{m,0}+F_{m-1,1}+\ldots+F_{0,m}\,,
\end{equation}
where each $F_{k,\ell}(x,y)$ is $k$-linear in~$x$ and $\ell$-linear in~$y$.
Since $F$ is symmetric, $F_{\ell,k}$ is obtained from $F_{k,\ell}$ by interchanging $x$ and~$y$.
Moreover,
$$F(x,0)=\|x\|^2,\text{ \ hence }F_{k,0}=0=F_{0,k}\text{ for all }k>2.
$$
Note that by the First Variation Formula we have
$$\frac d{dt}\Bigl|_{t=0}\,F(tx,y)=-2\langle x,y\rangle,\text{ \ hence }
F_{1,k}=0=F_{k,1}\text{ for all }k>1.
$$
(This was not used in~\cite{Ni}.) In particular,
$$F_3=0\text{ \  and \ } F_4=F_{2,2}
$$
(as already known),
and what we are actually after are explicit formulas, in our two-dimensional setting, for
\begin{equation*}
F_5=F_{3,2}+F_{2,3}\text{ \ and \ }
F_6=F_{4,2}+F_{3,3}+F_{2,4}.
\end{equation*}
For each $y\in W$, $F^y:=F(\,.\,,y):W\to{\mathbb R}$ is smooth.
Let~$\hat g$ be the Riemannian metric $(\exp_p|_W)^*g$ on~$W$.
Then (\ref{eq:hamjac}) says
$$4F^y=\|dF^y\|_{\hat g}^2.
$$
Since we assume $\operatorname{dim\,} M=2$, we can
express $\|(dF^y)_x\|_{\hat g}^2$ at each nonzero $x\in W$ as follows: Consider
the $\hat g$-orthonormal basis
$\{x/\|x\|,\tilde x/\|\tilde x\|_{\hat g}\}$
of $T_x W$, where $\tilde x\in T_x W$ denotes the $90$-degree rotation of~$x$
with respect to an arbitrarily chosen orientation on the euclidean plane~$(T_pM,g_p)$. Then
\begin{equation}
\label{eq:transform}
\begin{split}
\|(dF^y)_x\|_{\hat g}^2={}&(dF^y)_x(x)^2/\|x\|^2+(dF^y)_x(\tilde x)^2/\|\tilde x\|_{\hat g}^2\\
    ={}&\bigl((dF^y)_x(x)^2+(dF_y)_x(\tilde x)^2\bigr)/\|x\|^2-(dF^y)_x(\tilde x)^2/\|x\|^2
		+(dF^y)_x(\tilde x)^2/\|\tilde x\|^2_{\hat g}\\
    ={}&\|(dF^y)_x\|^2+(dF^y)_x(\tilde x)^2(\|\tilde x\|_{\hat g}^{-2}-\|x\|^{-2})
\end{split}
\end{equation}
For this, recall that $\|\,.\,\|$ denotes the norm with respect to $g_p$, and for $x$ viewed as an element of $T_x W$,
$\|x\|_{\hat g}=\|x\|$ since $\exp_p$ is a radial isometry.
Using~(\ref{eq:theta}) for $u=x/\|x\|$, $r=\|x\|$ and noting that
$$\|(d\exp_p)_x(\tilde x)\|=\theta_u(r)\|\tilde x\|=\theta_u(r)\|x\|,
$$
we have, for $\tilde x$ viewed as an element of $T_x W$:
\begin{equation*}
\|\tilde x\|_{\hat g}=\|x\|-\frac16 K(p)\|x\|^3-\frac1{12} dK_p(x)\|x\|^3
+\frac1{120}K(p)^2\|x\|^5
-\frac1{40}\nabla^2K_p(x,x)\|x\|^3+O(\|x\|^6).
\end{equation*}
By the resulting expansion of $\|\tilde x\|_{\hat g}^{-2}$ and (\ref{eq:transform}), equation (\ref{eq:hamjac}) becomes
\begin{align*}
4F(x,y)={}&\|(dF^y)_x\|^2+{}\\
      &+(dF^y)_x(\tilde x)^2\cdot\Bigl(\frac 13 K(p)+\frac16 dK_p(x)
			+\frac1{15} K(p)^2\|x\|^2+\frac1{20}\nabla^2K_p(x,x)
			+O(\|x\|^3)\Bigr).
\end{align*}
Comparing the terms of total order five in $x$ and $y$ in this equation we get, writing
$F_m^y:=F_m(\,.\,,y)$, noting that $(dF_m^y)_x$ is of total
order $m-1$, and recalling $F_0=0$, $F_1=0$,
$F_2(x,y)=\|x-y\|^2$, $F_3=0$:
\begin{equation*}
\begin{split}
4F_5(x,y)&=2\langle (dF_5^y)_x,(dF_2^y)_x\rangle +(dF_2^y)_x(\tilde x)^2\cdot\frac16 dK_p(x)\\
&=4(dF_5^y)_x(x-y)+4\langle x-y,\tilde x\rangle ^2\cdot\frac16 dK_p(x).
\end{split}
\end{equation*}
In particular, by $(dF_{k,\ell}^y)_x(x)=k F_{k,\ell}(x,y)$ we have
$$4F_{3,2}(x,y)=12 F_{3,2}(x,y)+4\|x\wedge y\|^2\cdot\frac16 dK_p(x),
$$
which gives
$$F_{3,2}(x,y)=-\frac 1{12}dK_p(x)\|x\wedge y\|^2.
$$
The claimed form of
$F_5$ now follows by symmetry in $x$ and $y$.
Similarly, taking the well-known formula
$$F_4(x,y)=-\frac13K(p)\|x\wedge y\|^2
$$ for granted
(which could otherwise first been proved analogously), and using
\begin{equation*}
(dF_2^y)_x(\tilde x)=-2\langle \tilde x,y\rangle,\text{ \ }
(dF^y_4)_x(\tilde x)=\frac23 K(p)\langle x,y\rangle \langle \tilde x,y\rangle,\text{ \ }
\langle \tilde x,y\rangle ^2=\|x\wedge y\|^2,
\end{equation*}
we obtain
\begin{equation*}
\begin{split}
4F_6(x,y)={}&\|(dF_4^y)_x\|^2+2\langle (dF_6^y)_x,(dF_2^y)_x\rangle
         +2(dF^y_2)_x(\tilde x) (dF_4^y)_x(\tilde x)\cdot \frac13 K(p)\\
         &+ (dF_2^y)_x(\tilde x)^2\cdot\Bigl(\frac1{15} K(p)^2\|x\|^2+\frac1{20}\nabla^2 K_p(x,x)\Bigr)\\
				 ={}&\frac49 K(p)^2\|x\wedge y\|^2\|y\|^2+4(dF_6^y)_x(x-y)
				 -\frac89 K(p)^2\|x\wedge y\|^2\langle x,y\rangle\\
				 &+4\|x\wedge y\|^2\cdot\Bigl(\frac1{15} K(p)^2\|x\|^2+\frac1{20}\nabla^2 K_p(x,x)\Bigr).
\end{split}
\end{equation*}
In particular,
$$4 F_{4,2}(x,y)=16 F_{4,2}(x,y)+4\|x\wedge y\|^2\cdot\Bigl(\frac1{15} K(p)^2\|x\|^2+\frac1{20}\nabla^2 K_p(x,x)\Bigr).
$$
Thus,
$$F_{4,2}(x,y)=\|x\wedge y\|^2\cdot\Bigl(-\frac1{45}K(p)^2\|x\|^2-\frac1{60}\nabla^2K_p(x,x)\Bigr),
$$
and the analogous expression for $F_{2,4}(x,y)$, as claimed. Finally,
\begin{equation*}
\begin{split}
4 F_{3,3}(x,y)={}&4(dF_{4,2}^y)_x(-y)+4(dF_{3,3}^y)_x(x)-\frac89 K(p)^2\|x\wedge y\|^2\langle x,y\rangle\\
   ={}&\|x\wedge y\|^2\cdot\Bigl(\frac8{45}K(p)^2\langle x,y\rangle +\frac8{60}\nabla^2K_p(x,y)\Bigr)+12 F_{3,3}(x,y)
	 -\frac89 K(p)^2\|x\wedge y\|^2\langle x,y\rangle,
\end{split}
\end{equation*}
yielding
$$F_{3,3}(x,y)=\|x\wedge y\|^2\cdot\Bigl(\frac4{45} K(p)^2\langle x,y\rangle -\frac1{60}\nabla^2K_p(x,y)\Bigr),
$$
as claimed.

\end{document}